\def\diag{\operatorname{diag}}
\def\id{\operatorname{id}}
\def\tr{\operatorname{tr}}
\newtheorem{cor}{Corollary}[section]
\newtheorem{prop}[cor]{Proposition}
\newtheorem{lem}[cor]{Lemma}
\newtheorem{thm}[cor]{Theorem}
\theoremstyle{remark}
\newtheorem{rem}[cor]{Remark}
\title{Geometric properties of domains related to $\mu$-synthesis}
\author{Pawe\l{} Zapa\l owski}
\address{Faculty of Mathematics and Computer Science, Jagiellonian University, \L o\-ja\-sie\-wi\-cza 6, 30-348 Krak\'ow, Poland}
\email{Pawel.Zapalowski@im.uj.edu.pl}
\subjclass[2010]{32F45, 32F17, 32A07}
\keywords{Lempert theorem, $\mathbb C$-convexity, linear convexity, $\mu_{1,n}$-quotient, pentablock}
\begin{document}

\begin{abstract}In the paper we study the geometric properties of a large family of domains, called the generalized tetrablocks, related to the $\mu$-synthesis, containing both the family of the symmetrized polydiscs and the family of the $\mu_{1,n}$-quotients $\mathbb E_n$, $n\geq2$, introduced recently by G.~Bharali. It is proved that the generalized tetrablock cannot be exhausted by domains biholomorphic to convex ones. Moreover, it is shown that the Carath\'eodory distance and the Lempert function are not equal on a large subfamily of the generalized tetrablocks, containing i.a.~$\mathbb E_n$, $n\geq4$. We also derive a number of geometric properties of the generalized tetrablocks as well as the $\mu_{1,n}$-quotients. As a by-product, we get that the pentablock, another domain related to the $\mu$-synthesis problem introduced recently by J.~Agler, Z.~A.~Lykova, and N.~J.~Young, cannot be exhausted by domains biholomorphic to convex ones.
\end{abstract}

\maketitle

\section{Introduction}

A consequence of the celebrated Lempert theorem (cf.~\cite{l1981}) is the fact that if a domain $D$ can be exhausted by domains biholomorphic to convex ones, then the Carath\'eodory distance and the Lempert function coincide on $D$.

For more than 20 years it was an open conjecture that any bounded pseudoconvex domain $D$ with equality of the Carath\'eodory distance and the Lempert function can be exhausted by domains biholomorphic to convex ones.

Ten years ago A.~Agler and N.~J.~Young introduced domain $\mathbb G_2$, arising from the $\mu$-synthesis, called symmetrized bidisc (cf.~\cite{ay2004}). In 2007 A.~A.~Abouhajar, M.~C.~White, N.~J.~Young introduced another domain related to $\mu$-synthesis problem, called tetrablock and denoted by $\mathbb E$ (cf.~\cite{awy2007}). Both domains are bounded, hyperconvex (cf.~Section~\ref{sect:gt} for the definition of the hyperconvexity), and they cannot be exhausted by domains biholomorphic to convex ones. Nevertheless, the Lempert function and the Carath\'eodory distance coincide on them (see \cite{ay2004}, \cite{c2004a}, \cite{e2004}, \cite{ekz2013}, \cite{npz2008}). Further properties of these domains may be found in \cite{k2009}, \cite{pz2012} and \cite{z2013}.

$\mathbb G_2$ and $\mathbb E$ are---so far---the only counterexamples to the conjecture stated above.

A natural generalization of the symmetrized bidisc to higher dimensions is the symmetrized polydisc (cf.~\cite{c2005}). It turned out that in the family of the symmetrized polydiscs the symmetrized bidisc is the only counterexample for the converse to the Lempert theorem (see \cite{n2006}, \cite{nptz2008}, \cite{npz2007}). Further properties of the symmetrized polydisc may be found in \cite{ez2005}.

Recently G.~Bharali introduced another domain closely associated with an aspect of $\mu$-synthesis, denoted by $\mathbb E_n$ and called $\mu_{1,n}$-quotient, $n\geq2$ (cf.~\cite{b2014}). It is a natural generalization of the tetrablock, since $\mathbb E_2=\mathbb E$.

This article is devoted to studying the complex geometry of bounded domains related to the $\mu$-synthesis, which form a large family, containing both the family of the symmetrized polydiscs and the family of the $\mu_{1,n}$-quotients. The domains considered in the paper are generated by the space $E$ of the scalar block diagonal matrices (see the formula (\ref{eq:e}) below). We shall call them the generalized tetrablocks and denote by $\mathbb E_E$. In the engineering literature (e.g.~\cite{dp1993}) the space $E$ of matrices is usually taken to be given by a block diagonal structure, which partially justifies our choice. Let us mention here that such a choice of the space $E$ implies the logarithmic plurisubharmonicity of the structured singular value $\mu_E$ (cf.~Proposition~\ref{prop:psc}). The relation of the generalized tetrablocks to the $\mu$-synthesis problem will be explained in Section~\ref{sect:gt}.

Our first aim is to show that most of the generalized tetrablocks are not the counterexamples for the converse to the Lempert theorem. To be more precise, we show that the Carath\'eodory distance and the Lempert function are not equal on a large subfamily---denote it for a moment by $\mathcal E$---of the generalized tetrablocks (cf.~Proposition~\ref{prop:ecl}). We also show that none of the generalized tetrablock can be exhausted by domains biholomorphic to convex ones (cf.~Theorem~\ref{thm:e1}).

We also prove that any generalized tetrablock from the family $\mathcal E$ is neither $\mathbb C$-convex nor starlike about the origin, and that there is another subfamily of the generalized tetrablocks, containing i.a.~the $\mu_{1,n}$-quotients, such that each member of this subfamily is linearly convex, and hence pseudoconvex (cf.~Proposition~\ref{prop:lc}), hyperconvex and polynomially convex (cf.~Proposition~\ref{prop:pc}).

As an application, we get that in the family of the $\mu_{1,n}$-quotients, bounded hyperconvex domains, there are at most two counterexamples to the converse of the Lempert theorem. More precisely, the Carath\'eodory distance and the Lempert function are not equal on $\mathbb E_n$, $n\geq4$. Moreover, none of $\mathbb E_n$ can be exhausted by domains biholomorphic to convex ones (cf.~Theorem~\ref{thm:ee2en}, which collects also further properties of the $\mu_{1,n}$-quotients). All this properties make the family of the $\mu_{1,n}$-quotients very similar the family of the symmetrized polydiscs.

As a by-product of our considerations we get that the pentablock, another domain related to $\mu$-synthesis introduced recently by J.~Agler, Z.~A.~Lykova, and N.~J.~Young in \cite{aly2014}---although it is not generated by the space of the scalar block diagonal matrices---is hyperconvex and yet cannot be exhausted by domains biholomorphic to convex ones (cf.~Theorem~\ref{thm:3} and Proposition~\ref{prop:ph}).

Almost all results mentioned above are---more or less---easy consequence of the following, simple but powerful, fact saying that the generalized tetrablock $\mathbb E_{E'}$ generated by any subspace $E'$ of the vector space $E$ is an analytic retract of $\mathbb E_E$ (cf.~Theorem~\ref{thm:ret}). Another important tool we exploit in the paper are Propositions~\ref{prop:x} and \ref{prop:cl}, which originate in A.~Edigarian's paper \cite{e2013}. Since both propositions may be formulated in terms of arbitrary retracts, we put them into separate section.

The paper is organized as follows. In Section~\ref{sect:ar} we formulate two properties of general analytic retracts, we shall use in the sequel. In Section~\ref{sect:gt} we define the family of the generalized tetrablocks, show their relation to the $\mu$-synthesis problem, and give its geometric properties. In Section~\ref{sect:mq} we gather all results concerning the $\mu_{1,n}$-quotients, whereas the last section is devoted to the pentablock.

Here is some notation we shall use throughout the paper. By $\mathbb D$ we denote the open unit disc in the complex plane. Let $c_D$, $k_D$, and $l_D$ denote, respectively, the Carath\'eodory pseudodistance, the Ko\-ba\-ya\-shi pseudodistance, and the Lempert function of a domain $D\subset\mathbb C^n$ (for the definition and main properties of $c_D$, $k_D$, and $l_D$ the Reader may consult \cite{jp2013}). For $z=(z_1,\dots,z_n)\in\mathbb C^n$, $\lambda\in\mathbb C$ and $\alpha=(\alpha_1,\dots,\alpha_n)\in\mathbb Z_+^n$ we use the standard notation
$$
\lambda z:=(\lambda z_1,\dots,\lambda z_n),\quad z^{\alpha}:=z_1^{\alpha_1}\dots z_n^{\alpha_n}.
$$
Moreover, for $m=(m_1,\dots,m_n)\in\mathbb N^n$ and $\lambda\in\mathbb C$ denote the action on $\mathbb C^n$
$$
m_{\lambda}.z:=(\lambda^{m_1}z_1,\dots,\lambda^{m_n}z_n),\quad z=(z_1,\dots,z_n)\in\mathbb C^n.
$$
In the paper we will use the notion of quasibalanced domains. Recall that a domain $D\subset\mathbb C^n$ is called \emph{$m$-balanced}, where $m=(m_1,\dots,m_n)\in\mathbb N^n$, if $m_{\lambda}.z\in D$ whenever $z\in D$ and $\lambda\in\overline{\mathbb D}$. A $(1,\dots,1)$-balanced domain is called \emph{balanced}. A domain is called \emph{quasibalanced}, if it is $m$-balanced for some $m$.

\section{Analytic retracts}\label{sect:ar}

A domain $G$ is said to be an \emph{analytic retract} of a domain $D$ if there exist analytic maps $\theta:G\longrightarrow D$, $\iota:D\longrightarrow G$ such that $\iota\circ\theta=\id_G$.

For a domain $G$ by $\mathcal S(G)$ we denote the set of all holomorphic mappings $F:G\times G\longrightarrow G$ such that $F(z,z)=z$, $F(z,w)=F(w,z)$ for any $z,w\in G$.

Moreover, $G$ is called \emph{taut} if for any sequence $(f_j)_{j\in\mathbb N}$ of holomorphic mappings $f_j:\mathbb D\longrightarrow G$ there exists a subsequence $(f_{j_{\nu}})_{\nu\in\mathbb N}$ convergent uniformly on compact sets to a holomorphic mapping $f:\mathbb D\longrightarrow G$ or there exists a subsequence $(f_{j_{\nu}})_{\nu\in\mathbb N}$ that diverges uniformly on compact sets.

We shall make use of the following simple observation, which originates in A.~Edigarian's paper \cite{e2013} and is interesting in its own right.

\begin{prop}\label{prop:x}Let $G$ be an analytic retract of $D$ such that $\mathcal S(G)=\varnothing$. Then $D$ is not biholomorphic to a convex domain. If, additionally, $G$ is taut, then $D$ cannot be exhausted by domains biholomorphic to convex ones.
\end{prop}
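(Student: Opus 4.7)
The plan is to contradict the hypothesis $\mathcal{S}(G)=\varnothing$ by manufacturing an element of $\mathcal{S}(G)$ out of (exhausting) convexity on the $D$-side. The basic observation is that every convex domain $C$ carries the midpoint map $(u,v)\mapsto\tfrac{1}{2}(u+v)\in\mathcal{S}(C)$. Pulled back through a biholomorphism $\Phi\colon D\to C$ this yields
\[
F_D(z,w) := \Phi^{-1}\!\left(\tfrac{1}{2}\Phi(z)+\tfrac{1}{2}\Phi(w)\right)\in\mathcal{S}(D),
\]
and then pushed through the retraction $\iota\circ\theta=\id_G$ it becomes
\[
F_G(z,w) := \iota\!\bigl(F_D(\theta(z),\theta(w))\bigr).
\]
Since $F_G(z,z)=\iota(\theta(z))=z$ and symmetry is transparent, $F_G\in\mathcal{S}(G)$, which is the desired contradiction; this settles the first statement.

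For the second statement, write $D=\bigcup_{j\in\mathbb{N}}D_j$ with $D_j\subset D_{j+1}$ and each $D_j$ biholomorphic to a convex domain, so that the construction above produces $F_j\in\mathcal{S}(D_j)$ for every $j$. Because $\theta$ is continuous, the sets $H_j:=\theta^{-1}(D_j)$ form an increasing open exhaustion of $G$, and on $H_j\times H_j$ one may define
\[
\widetilde{F}_j(z,w) := \iota\!\bigl(F_j(\theta(z),\theta(w))\bigr).
\]
Each $\widetilde{F}_j$ is holomorphic, symmetric, fixes the diagonal, and takes values in $G$.

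It remains to extract a subsequential limit $F\in\mathcal{S}(G)$. Given any compact $K\subset G\times G$, one has $K\subset H_j\times H_j$ for all large $j$, so $(\widetilde{F}_j)$ is eventually defined on $K$. The tautness of $G$---together with its standard extension to arbitrary complex-manifold sources (see \cite{jp2013})---ensures that any sequence in $\mathrm{Hol}(U,G)$ admits a subsequence either converging locally uniformly to a holomorphic map $U\to G$, or diverging off every compact of $G$. Divergence is forbidden by the identity $\widetilde{F}_j(z,z)=z$, which pins the image to any prescribed diagonal point, and a diagonal extraction over an exhaustion of $G\times G$ produces a holomorphic limit $F\colon G\times G\to G$; symmetry and the diagonal identity pass to the limit, whence $F\in\mathcal{S}(G)$---again a contradiction.

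The main obstacle is precisely this last step: ensuring that the limit actually takes values in $G$ rather than in $\partial G$. A plain Montel-type argument on a bounded $G$ would at best place the limit in $\overline{G}$, and the diagonal identity alone does not prevent the limit from touching the boundary off the diagonal; it is the tautness hypothesis that excludes this and forces $F\in\mathcal{S}(G)$, which is why it is needed only for the exhaustion half of the proposition.
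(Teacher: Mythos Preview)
Your proof is correct and follows essentially the same approach as the paper: build the midpoint map on the convex side, transport it back to $D$ via the biholomorphism, and then to $G$ via the retraction pair $(\theta,\iota)$; for the exhaustion, do this on each $D_j$, pass to a limit, and use tautness together with the diagonal identity to keep the limit inside $G$. The only cosmetic difference is that the paper first applies a Montel argument to land in $\overline{G}$ and then uses tautness to obtain the dichotomy $F(G\times G)\subset G$ or $F(G\times G)\subset\partial G$, whereas you invoke the (equivalent) normal-family formulation of tautness for arbitrary sources directly; both routes reach the same contradiction.
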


\begin{proof}Suppose $\Omega$ is a convex domain and $f:D\longrightarrow\Omega$ is biholomorphic. By assumption, there are holomorphic mappings $\theta:G\longrightarrow D$, $\iota:D\longrightarrow G$ with $\iota\circ\theta=\id_G$. Define
$$
F(z,w):=\iota\circ f^{-1}\left(\frac{f\circ\theta(z)+f\circ\theta(w)}{2}\right),\quad z,w\in G.
$$
Observe that $F\in\mathcal S(G)$---a contradiction.

Now assume $G$ is taut. Suppose $D_1\subset D_2\subset\dots$, $\bigcup_{j\geq1}D_j=D$, $\Omega_j$ is a convex domain and $f_j:D_j\longrightarrow\Omega_j$ is biholomorphic, $j\geq1$. Define $G_j:=\theta^{-1}(D_j)$ and
$$
F_j(z,w):=\iota\circ f_j^{-1}\left(\frac{f_j\circ\theta(z)+f_j\circ\theta(w)}{2}\right),\quad z,w\in G_j,\ j\geq1.
$$
Observe that $F_j:G_j\times G_j\longrightarrow G$ with $F_j(z,w)=F_j(w,z)$, $F_j(z,z)=z$, $z,w\in G_j$, $j\geq1$. It follows easily from Montel's argument that there exists a holomorphic mapping $F:G\times G\longrightarrow\overline{G}$ such that $F(z,w)=F(w,z)$, $F(z,z)=z$, $z,w\in G$. Tautness of $G$ implies that either $F(G\times G)\subset G$ or $F(G\times G)\subset\partial G$. Since $F(z,z)=z\in G$, we conclude that the first case holds, i.e.~$F\in\mathcal S(G)$---a contradiction.
\end{proof}

Using holomorphic contractibility of the families of the Kobayashi pseudodistances and the Lempert functions we are able to prove

\begin{prop}\label{prop:cl}Let $G$ be an analytic retract of $D$ such that $l_G$ is not a distance. Then $l_D$ is not a distance. In particular, $c_D\not\equiv l_D$ and $D$ cannot be exhausted by domains biholomorphic to convex ones.
\end{prop}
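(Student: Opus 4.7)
The plan is to transfer the failure of the triangle inequality from $G$ to $D$ by showing that the retraction makes $\theta$ an isometric embedding for the Lempert function. First, holomorphic contractivity of $l$ applied to $\theta:G\longrightarrow D$ gives $l_D(\theta(a),\theta(b))\leq l_G(a,b)$ for all $a,b\in G$; applying it to $\iota:D\longrightarrow G$ at the points $\theta(a),\theta(b)$ and invoking $\iota\circ\theta=\id_G$ yields the reverse inequality. Hence
$$l_D(\theta(a),\theta(b))=l_G(a,b),\quad a,b\in G,$$
so every triangle in $G$ has an isometric copy inside $D$.

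Since $l_G$ is non-negative, symmetric and vanishes on the diagonal, its failure to be a distance must be the failure of the triangle inequality at some triple $z_1,z_2,z_3\in G$. Setting $w_i:=\theta(z_i)\in D$ and using the isometry, the same strict inequality
$$l_D(w_1,w_3)>l_D(w_1,w_2)+l_D(w_2,w_3)$$
holds in $D$, so $l_D$ is not a distance. For the first ``in particular'' I would just observe that $c_D$ is a supremum of pull-backs of the Poincar\'e distance and is therefore always a pseudodistance; hence $c_D\equiv l_D$ would contradict the failure of the triangle inequality just established.

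For the last assertion I would argue by contradiction: suppose $D=\bigcup_{j\geq1}D_j$ with $D_j\nearrow D$ and each $D_j$ biholomorphic to a convex domain. By Lempert's theorem $c_{D_j}\equiv l_{D_j}$, so $l_{D_j}$ is a pseudodistance on $D_j$. Choose $j$ large enough that $w_1,w_2,w_3\in D_j$; then the triangle inequality holds for $l_{D_j}$ at these three points. I would then invoke the standard exhaustion fact $l_{D_j}(z,w)\searrow l_D(z,w)$---proved by rescaling any disc $f:\mathbb D\longrightarrow D$ realising $l_D(z,w)$ up to $\varepsilon$ as $\zeta\mapsto f(r\zeta)$ with $r\nearrow1$, whose image lies in a compact subset of $D$ and therefore eventually in $D_j$---to pass to the limit and recover the triangle inequality for $l_D$ at $w_1,w_2,w_3$, contradicting the previous paragraph. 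The only step requiring any care is this exhaustion limit; everything else is a formal consequence of holomorphic contractivity.
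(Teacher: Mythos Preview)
Your argument is correct. The main difference from the paper is the mechanism used to transfer the failure from $G$ to $D$. The paper argues by contraposition through the Kobayashi pseudodistance: it uses that $l_D$ is a (pseudo)distance precisely when $k_D\equiv l_D$, and then the chain
\[
k_G(x_1,x_2)\geq k_D(\theta(x_1),\theta(x_2))=l_D(\theta(x_1),\theta(x_2))\geq l_G(\iota\circ\theta(x_1),\iota\circ\theta(x_2))=l_G(x_1,x_2)
\]
forces $k_G\equiv l_G$. You instead bypass $k$ entirely by observing that the two-sided contractivity makes $\theta$ an $l$-isometry onto its image, so any bad triangle in $G$ is carried verbatim into $D$. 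Your route is a bit more elementary (no need to invoke that $k$ is the largest pseudodistance below $l$); the paper's route is terser and handles the ``in particular'' clauses implicitly, whereas you spell them out with $c_D$ always being a pseudodistance and with the monotone limit $l_{D_j}\searrow l_D$ under exhaustion.

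One small remark: when you say that the failure of $l_G$ to be a distance ``must be the failure of the triangle inequality'', you are tacitly using the paper's convention that ``distance'' here means ``pseudodistance'' (equivalently $k_G=l_G$). If one worried about the separation axiom instead, the same isometry argument still works: $l_G(a,b)=0$ with $a\neq b$ gives $l_D(\theta(a),\theta(b))=0$ with $\theta(a)\neq\theta(b)$ since $\theta$ is injective. In the applications of the paper $G$ is bounded, hence hyperbolic, so only the triangle inequality is at stake anyway.
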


\begin{proof}Suppose $l_D$ is a distance, i.e.~$k_D\equiv l_D$. By assumption, there are holomorphic mappings $\theta:G\longrightarrow D$, $\iota:D\longrightarrow G$ with $\iota\circ\theta=\id_G$. Then the holomorphic contractibility of the relevant families implies
\begin{align*}
k_G(x_1,x_2)&\geq k_D(\theta(x_1),\theta(x_2))=l_D(\theta(x_1),\theta(x_2))\\
{}&\geq l_G(\iota\circ\theta(x_1),\iota\circ\theta(x_2))=l_G(x_1,x_2),\quad x_1,x_2\in G,
\end{align*}
i.e.~$k_G\equiv l_G$---a contradiction.
\end{proof}

\section{The generalized tetrablock}\label{sect:gt}

Consider positive integers $n\geq2$, $s\leq n$, and $r_1,\dots,r_s$ with $\sum_{j=1}^sr_j=n$. In the set $A(r_1,\dots,r_s):=\{0,\dots,r_1\}\times\dots\times\{0,\dots,r_s\}\setminus\{(0,\dots,0)\}$ we introduce the following order. Given two different $\alpha=(\alpha_1,\dots,\alpha_s),\beta=(\beta_1,\dots,\beta_s)\in A(r_1,\dots,r_s)$ we write
\begin{equation}\label{eq:order}
\alpha<\beta\quad\textnormal{iff}\quad\alpha_{j_0}<\beta_{j_0},\textnormal{ where }j_0:=\max\{j:\alpha_j\neq\beta_j\}.
\end{equation}
Therefore we may write $A(r_1,\dots,r_s)=\{\alpha^1,\dots,\alpha^N\}$, where $\alpha^1<\dots<\alpha^N$ and $N:=\prod_{j=1}^s(r_j+1)-1$.

Finally, for $x=(x_1,\dots,x_N)\in\mathbb C^N$ and $z=(z_1,\dots,z_s)\in\mathbb C^s$ put
\begin{equation}\label{eq:R}
R_x(z):=1+\sum_{j=1}^N(-1)^{|\alpha^j|}x_jz^{\alpha^j},\quad
\end{equation}
and define
$$
\mathbb E_{n;s;r_1,\dots,r_s}:=\left\{x\in\mathbb C^N:\forall_{z\in\overline{\mathbb D}^s}\ R_x(z)\neq0\right\}.
$$
The set $\mathbb E_{n;s;r_1,\dots,r_s}$ we shall call the \emph{generalized tetrablock}.

\begin{rem}Note that $\mathbb E_{2;1;2}=\mathbb G_2$, $\mathbb E_{n;1;n}=\mathbb G_n$, $\mathbb E_{2;2;1,1}=\mathbb E$, and $\mathbb E_{n;2;n-1,1}=\mathbb E_n$.
\end{rem}

\subsection{Relation to the $\mu$-synthesis problem}

One of the central notions in the theory of robust control is the structured singular value, a matrix function denoted by $\mu$ and defined on $\mathbb C^{m\times n}$. In the definition of $\mu$ there is an underlying structure identified with linear subspace $E$ of $\mathbb C^{n\times m}$.

Let $E$ be a linear subspace of $\mathbb C^{n\times m}$. The \emph{structured singular value $\mu_E$ relative to $E$} is a function $\mu_E:\mathbb C^{m\times n}\longrightarrow\mathbb R_+$ given by
$$
\mu_E(A):=\frac{1}{\inf\{\|X\|:X\in E,\ \det(\mathbb I_n-AX)=0\}},\quad A\in\mathbb C^{m\times n},
$$
with the understanding that $\mu_E(A)=0$ if $\mathbb I_n-AX$ is always nonsingular. Here $\|\cdot\|$ denotes the operator norm. Recall that
\begin{itemize}
\item$\mu_E$ is upper semicontinuous,
\item$\mu_E(\lambda A)=|\lambda|\mu_E(A)$ for any $\lambda\in\mathbb C$, $A\in\mathbb C^{m\times n}$.
\end{itemize}
In particular,
\begin{equation*}
\Omega_{\mu_E}:=\{A\in\mathbb C^{n\times n}:\mu_E(A)<1\}
\end{equation*}
is a balanced domain and $\mu_E$ is its Minkowski functional (cf.~\cite{jp2013}, Remark~2.2.1).

The space $E$ is usually taken to be given by a block diagonal structure (cf.~\cite{dp1993} for basic properties of $\mu_E$ is this case). In this paper we consider only repeated scalar blocks. To be more precise, for a given positive integers $n\geq2$, $s\leq n$, and $r_1,\dots,r_s$ with $\sum_{j=1}^sr_j=n$, consider the vector subspace $E\subset\mathbb C^{n\times n}$ consisting of the following scalar block diagonal matrices
\begin{equation}\label{eq:e}
E=E(n;s;r_1,\dots,r_s):=\{\diag[z_1\mathbb I_{r_1},\dots z_s\mathbb I_{r_s}]\in\mathbb C^{n\times n}:z_1,\dots,z_s\in\mathbb C\}.
\end{equation}
Throughout the paper $E$ shall always denote the above subspace unless stated otherwise. For such a space $E$,
\begin{itemize}
\item$\rho=\mu_{E(n;1;n)}\leq\mu_E\leq\mu_{\mathbb C^{n\times n}}=\|\cdot\|$, where $\rho$ is the spectral radius,
\item$\mathbb B_{n\times n}\subset\Omega_{\mu_E}\subset\Omega_n$, where $\mathbb B_{n\times n}:=\{X\in\mathbb C^{n\times n}:\|X\|<1\}$ is the \emph{unit ball} and $\Omega_n:=\{X\in\mathbb C^{n\times n}:\rho(X)<1\}$ is the \emph{spectral ball},
\item$\mu_E$ is continuous,
\item$\mu_E(A)=\max_{X\in\overline{\mathbb B}_{n\times n}\cap E}\rho(XA)$ for any $A\in E$.
\end{itemize}

\begin{prop}\label{prop:psc}$\log\mu_E$ is continuous plurisubharmonic and $\Omega_{\mu_E}$ is pseudoconvex.
\end{prop}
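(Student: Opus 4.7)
The plan is to establish first the identity
\begin{equation*}
\mu_E(A)=\max\{\rho(XA):X\in E,\ \|X\|\leq1\}
\end{equation*}
\emph{for every} $A\in\mathbb C^{n\times n}$---the bullet list in the excerpt gives this only for $A\in E$, but the argument does not use that restriction. Writing a candidate $X\in E\setminus\{0\}$ as $X=tY$ with $t=\|X\|$ and $\|Y\|=1$, the condition $\det(\mathbb I_n-AX)=0$ reads $1/t\in\sigma(AY)$; since $E$ is closed under complex scalar multiplication, one can rotate any eigenvalue of $AY$ of maximal modulus onto the positive real axis while keeping $\|Y\|=1$, whence
\begin{equation*}
\inf\{\|X\|:X\in E,\ \det(\mathbb I_n-AX)=0\}=\frac1{\sup\{\rho(AY):Y\in E,\ \|Y\|=1\}}.
\end{equation*}
Compactness of $\overline{\mathbb B}_{n\times n}\cap E$ and continuity of the spectral radius $\rho$ turn the supremum into a maximum.

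Next I would invoke the classical fact that $M\mapsto\log\rho(M)$ is plurisubharmonic on $\mathbb C^{n\times n}$: the eigenvalues of $M$ can be chosen locally as holomorphic branches $\lambda_1(M),\dots,\lambda_n(M)$, each $\log|\lambda_j|$ is plurisubharmonic, and the equality $\log\rho(M)=\max_j\log|\lambda_j(M)|$ holds independently of the branch labelling. Composition with the linear map $A\mapsto XA$ for fixed $X$ preserves plurisubharmonicity, so $A\mapsto\log\rho(XA)$ is plurisubharmonic for every $X\in E$. The representation above then writes $\log\mu_E$ as the pointwise supremum of this family; combined with the continuity of $\mu_E$ already recorded in the bullet list, this supremum is upper semicontinuous, which is the only additional ingredient needed to conclude that $\log\mu_E$ is continuous and plurisubharmonic.

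Finally, $\Omega_{\mu_E}=\{A\in\mathbb C^{n\times n}:\log\mu_E(A)<0\}$ is a sublevel set of a continuous plurisubharmonic function, hence pseudoconvex; equivalently, $\Omega_{\mu_E}$ is a balanced domain whose Minkowski functional has a plurisubharmonic logarithm, so pseudoconvexity follows at once from the standard characterisation of pseudoconvex balanced domains (cf.~\cite{jp2013}). I expect the main, rather mild, obstacle to be the extension of the sup-of-spectral-radii formula to arbitrary $A\in\mathbb C^{n\times n}$, since the bullet list as stated only provides the formula for $A\in E$; as the eigenvalue-rotation argument above shows, however, nothing is needed beyond the fact that $E$ is a complex linear subspace, and all remaining steps are routine.
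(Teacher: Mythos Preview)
Your overall strategy coincides with the paper's: both arguments use the representation $\mu_E(A)=\max_{X\in\overline{\mathbb B}_{n\times n}\cap E}\rho(XA)$ together with the plurisubharmonicity of (the logarithm of) the spectral radius to conclude that $\log\mu_E$ is plurisubharmonic, and then read off pseudoconvexity of $\Omega_{\mu_E}$ from standard balanced-domain theory. The paper is slightly terser---it first notes that $\mu_E$ itself is plurisubharmonic and then invokes a single reference (\cite{jp2000}, Proposition~2.2.22) for the implication ``plurisubharmonic Minkowski functional $\Rightarrow$ pseudoconvex and $\log$ plurisubharmonic''---whereas you argue for $\log\mu_E$ directly; the two routes are equivalent. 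Your observation that the max formula must hold for all $A\in\mathbb C^{n\times n}$, not merely $A\in E$, is correct and is indeed what is being used.

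There is, however, a genuine gap in your justification of the plurisubharmonicity of $\log\rho$. The claim that ``the eigenvalues of $M$ can be chosen locally as holomorphic branches $\lambda_1(M),\dots,\lambda_n(M)$'' is false in general: for the one-parameter family $M(\zeta)=\bigl(\begin{smallmatrix}0&1\\ \zeta&0\end{smallmatrix}\bigr)$ the eigenvalues are $\pm\sqrt\zeta$, which admit no holomorphic selection in any neighbourhood of $\zeta=0$. The plurisubharmonicity of $\log\rho$ is nevertheless true---this is precisely Vesentini's theorem \cite{v1968}, which the paper cites---but its proof proceeds by other means (e.g.\ via the spectral-radius formula $\rho(M)=\lim_k\|M^k\|^{1/k}$, or via the resolvent). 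Replace your eigenvalue-branch argument by a reference to Vesentini and the proof goes through.
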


\begin{proof}Recall that the spectral radius $\rho$ is plurisubharmonic  function (cf.~\cite{v1968}). Then the properties above imply that $\mu_E$ is plurisubharmonic. Now, as $\mu_E$ is plurisubharmonic Minkowski functional of the balanced domain $\Omega_{\mu_E}$, we conclude that $\Omega_{\mu_E}$ is pseudoconvex and $\log\mu_E$ is plurisubharmonic (cf.~\cite{jp2000}, Proposition 2.2.22).
\end{proof}

In the theory of robust control, the $\mu$-synthesis problem---an interpolation problem for analytic matrix functions, a generalization of the classical problems of Nevanlinna-Pick and Carath\'eodory-Fej\'er---is to construct an analytic matrix function $F:\mathbb D\longrightarrow\overline{\Omega}_{\mu_E}$ satisfying a finite number of interpolation conditions.

There is a natural relation between $\mathbb E_{n;s;r_1,\dots,r_s}$ and the domain $\Omega_{\mu_E}$.

For $j\leq n$ let $\mathcal J^j:=\{(i_1,\dots,i_j)\in\mathbb N^j:1\leq i_1<\dots<i_j\leq n\}$. Moreover, for $\alpha\in A(r_1,\dots,r_s)$ define
\begin{multline*}
\mathcal J^{|\alpha|}_{\alpha}:=\{(i_1,\dots,i_{|\alpha|})\in\mathcal J^{|\alpha|}:r_1+\dots+r_{j-1}+1\leq i_{\alpha_1+\dots+\alpha_{j-1}+1}<\\\dots<i_{\alpha_1+\dots+\alpha_j}\leq r_1+\dots+r_j,\ j=1,\dots,s\},
\end{multline*}
(recall here that $|\alpha|\leq n$). It is elementary to see that
$$
\bigcup_{j=1}^n\mathcal J^j=\bigcup_{j=1}^N\mathcal J^{|\alpha^j|}_{\alpha^j}\quad\textnormal{and}\quad\mathcal J^{|\alpha|}_{\alpha}\cap\mathcal J^{|\beta|}_{\beta}=\varnothing\textnormal{ whenever } \alpha\neq\beta.
$$
Finally, for $I\in\mathcal J^j$ and $A\in\mathbb C^{n\times n}$ let $A_I$ denotes the $j\times j$ submatrix of $A$ whose rows and columns are indexed by $I$.

Define a polynomial mapping $\pi_E:\mathbb C^{n\times n}\longrightarrow\mathbb C^N$ given by
\begin{equation*}
\pi_E(A):=\left(\sum_{I\in\mathcal J^{|\alpha^1|}_{\alpha^1}}\det A_I,\dots,\sum_{I\in\mathcal J^{|\alpha^N|}_{\alpha^N}}\det A_I\right).
\end{equation*}

\begin{prop}\label{prop:omega}$\pi_E(\Omega_{\mu_E})\subset\mathbb E_{n;s;r_1,\dots,r_s}$.
\end{prop}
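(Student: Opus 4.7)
The plan is to reduce the assertion to the single algebraic identity
\[
\det(\mathbb I_n - AX_z) = R_{\pi_E(A)}(z), \qquad A\in\mathbb C^{n\times n},\ z=(z_1,\dots,z_s)\in\mathbb C^s,
\]
where $X_z := \diag[z_1\mathbb I_{r_1},\dots,z_s\mathbb I_{r_s}]\in E$. Once this is in place, the inclusion follows at once from the definition of $\mu_E$: if $A\in\Omega_{\mu_E}$ then
\[
\inf\{\|X\|:X\in E,\ \det(\mathbb I_n-AX)=0\}=\tfrac{1}{\mu_E(A)}>1,
\]
so for every $z\in\overline{\mathbb D}^s$ we have $\|X_z\|=\max_j|z_j|\le1$, hence $\det(\mathbb I_n-AX_z)\neq0$, i.e.~$R_{\pi_E(A)}(z)\neq0$ and $\pi_E(A)\in\mathbb E_{n;s;r_1,\dots,r_s}$.

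To prove the identity I would start from the well-known expansion of $\det(\mathbb I_n-M)$ via principal minors,
\[
\det(\mathbb I_n-M)=1+\sum_{j=1}^n(-1)^j\sum_{I\in\mathcal J^j}\det M_I,
\]
applied to $M=AX_z$. Since $X_z$ is diagonal, for any $I=(i_1,\dots,i_j)\in\mathcal J^j$ one has $(AX_z)_I=A_I\cdot\diag\bigl[(X_z)_{i_1,i_1},\dots,(X_z)_{i_j,i_j}\bigr]$, and the diagonal entries of $X_z$ repeat $z_k$ exactly $r_k$ times within the $k$-th block. Consequently, if $I\in\mathcal J^{|\alpha|}_{\alpha}$ then
\[
\det(AX_z)_I=\det A_I\cdot z_1^{\alpha_1}\cdots z_s^{\alpha_s}=z^\alpha\det A_I.
\]

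The final step is to regroup the sum using the partition $\bigcup_{j=1}^n\mathcal J^j=\bigcup_{k=1}^N\mathcal J^{|\alpha^k|}_{\alpha^k}$ recorded in the excerpt. Collecting the coefficient of $z^{\alpha^k}$ gives precisely the $k$-th component of $\pi_E(A)$ with the sign $(-1)^{|\alpha^k|}$, so
\[
\det(\mathbb I_n-AX_z)=1+\sum_{k=1}^N(-1)^{|\alpha^k|}\Bigl(\sum_{I\in\mathcal J^{|\alpha^k|}_{\alpha^k}}\det A_I\Bigr)z^{\alpha^k}=R_{\pi_E(A)}(z),
\]
by the defining formulas (\ref{eq:R}) for $R_x$ and for $\pi_E$. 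There is no real obstacle here: once the block-diagonal structure of $X_z$ is used to identify $(AX_z)_I$ and the multi-indices $\alpha^k$ are seen to classify the $j\times j$ principal minors by how many rows they pick in each diagonal block, the identity drops out, and the proposition follows.
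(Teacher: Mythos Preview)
Your argument is correct and follows essentially the same route as the paper's proof: both establish the identity $\det(\mathbb I_n-AX_z)=R_{\pi_E(A)}(z)$ via the principal-minor expansion (which the paper isolates as Lemma~\ref{lem:2.1}, cited from \cite{b2014}) together with the observation that for $I\in\mathcal J^{|\alpha|}_\alpha$ the diagonal contribution is exactly $z^\alpha$, and then read off the inclusion from the definition of $\mu_E$. The only cosmetic differences are that the paper packages the expansion as a cited lemma and phrases the last step through an auxiliary radius $r>1$ with $\mu_E(A)\le 1/r$, whereas you invoke the infimum in the definition of $\mu_E$ directly; you should just note that the case $\mu_E(A)=0$ (empty singular set) is trivially covered.
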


In view of the above proposition, to shorten the notation, we shall write $\mathbb E_E:=\mathbb E_{n;s;r_1,\dots,r_s}$. In the proof we shall use the following

\begin{lem}[cf.~\cite{b2014}]\label{lem:2.1}If $A\in\mathbb C^{n\times n}$ then
$$
\det(\mathbb I_n-A\diag[z_1,\dots,z_n])=1+\sum_{j=1}^n(-1)^j\sum_{I\in\mathcal J^j}\det(A_I)z_I,
$$
where $z=(z_1,\dots,z_n)\in\mathbb C^n$ and $z_I:=z_{i_1}\dots z_{i_j}$ for $I=(i_1,\dots,i_j)$.
\end{lem}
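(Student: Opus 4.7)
\smallskip
The plan is to exhibit, for each $A\in\Omega_{\mu_E}$ and each $z\in\overline{\mathbb D}^s$, a concrete $X\in E$ of norm $\leq 1$ so that $\det(\mathbb I_n-AX)$ equals $R_{\pi_E(A)}(z)$; then non‑singularity forces $R_{\pi_E(A)}(z)\ne 0$, which is exactly what membership in the generalized tetrablock demands.

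Fix $A\in\Omega_{\mu_E}$. Given $z=(z_1,\dots,z_s)\in\overline{\mathbb D}^s$, set
$$
X(z):=\diag[z_1\mathbb I_{r_1},\dots,z_s\mathbb I_{r_s}]\in E,
$$
so $\|X(z)\|=\max_j|z_j|\leq 1$. The first step is to observe that since $\mu_E(A)<1$, the defining infimum in $\mu_E$ is attained (if at all) only at $X$ of norm $>1$, so in particular $\det(\mathbb I_n-AX(z))\ne 0$. (One should be a bit careful: the infimum in the definition of $\mu_E(A)$ is over $X\in E$ with $\det(\mathbb I_n-AX)=0$; $\mu_E(A)<1$ therefore means that no such $X$ satisfies $\|X\|\leq 1$.)

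The second step is a computation. Writing $X(z)=\diag[w_1,\dots,w_n]$ with $w_i=z_j$ whenever $r_1+\dots+r_{j-1}+1\leq i\leq r_1+\dots+r_j$, Lemma~\ref{lem:2.1} gives
$$
\det(\mathbb I_n-AX(z))=1+\sum_{j=1}^n(-1)^j\sum_{I\in\mathcal J^j}\det(A_I)\,w_I.
$$
For $I=(i_1,\dots,i_j)\in\mathcal J^j$, the product $w_I=w_{i_1}\cdots w_{i_j}$ depends only on how many of the indices $i_k$ fall into each block; more precisely, if $\alpha=(\alpha_1,\dots,\alpha_s)$ counts those cardinalities, then $I\in\mathcal J^{|\alpha|}_{\alpha}$ and $w_I=z_1^{\alpha_1}\cdots z_s^{\alpha_s}=z^{\alpha}$. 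Using the partition $\bigcup_j\mathcal J^j=\bigsqcup_{\alpha^k}\mathcal J^{|\alpha^k|}_{\alpha^k}$ already recorded before the statement, regroup the double sum by $\alpha^k$:
$$
\det(\mathbb I_n-AX(z))=1+\sum_{k=1}^N(-1)^{|\alpha^k|}z^{\alpha^k}\!\!\sum_{I\in\mathcal J^{|\alpha^k|}_{\alpha^k}}\!\!\det(A_I)=R_{\pi_E(A)}(z).
$$

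Combining the two steps yields $R_{\pi_E(A)}(z)\ne 0$ for every $z\in\overline{\mathbb D}^s$, i.e.\ $\pi_E(A)\in\mathbb E_{n;s;r_1,\dots,r_s}$. The only non‑routine point is the bookkeeping in the second step: verifying that the indexing set $\mathcal J^{|\alpha|}_{\alpha}$ defined in the paper is indeed exactly the set of $j$-subsets with block‑occupancy profile $\alpha$, which is immediate from the definitions, and that $w_I=z^\alpha$ is constant on that set. Everything else is direct substitution.
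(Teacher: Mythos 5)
Your write-up does not prove the statement at hand. The statement is Lemma~\ref{lem:2.1} itself, i.e.\ the determinant identity
$\det(\mathbb I_n-A\diag[z_1,\dots,z_n])=1+\sum_{j=1}^n(-1)^j\sum_{I\in\mathcal J^j}\det(A_I)z_I$.
What you wrote is instead a proof of Proposition~\ref{prop:omega} ($\pi_E(\Omega_{\mu_E})\subset\mathbb E_{n;s;r_1,\dots,r_s}$), and in its ``second step'' you explicitly invoke Lemma~\ref{lem:2.1} as a known fact. Relative to the task this is circular: the one identity you were asked to establish enters your argument as a black box, and nothing in your two steps (the choice of $X(z)$, the block-occupancy bookkeeping $w_I=z^{\alpha}$, the regrouping over $\mathcal J^{|\alpha|}_{\alpha}$) touches it. Incidentally, the argument you do give is essentially the paper's own proof of Proposition~\ref{prop:omega}, so as a proof of that proposition it is fine --- but it is the wrong statement. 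Note also that the paper offers no proof of Lemma~\ref{lem:2.1}; it is quoted from \cite{b2014}.

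To actually prove the lemma, the standard route is multilinearity of the determinant in the columns: the $i$-th column of $\mathbb I_n-A\diag[z_1,\dots,z_n]$ is $e_i-z_ia_i$, where $a_i$ is the $i$-th column of $A$. Expanding the determinant multilinearly gives a sum over subsets $I\subset\{1,\dots,n\}$, the term for $I=(i_1,\dots,i_j)$ being the determinant of the matrix whose columns are $-z_ia_i$ for $i\in I$ and $e_i$ for $i\notin I$; Laplace expansion along the unit-vector columns reduces this to $(-1)^jz_I\det(A_I)$, and summing over all $I$ (with $I=\varnothing$ giving the constant term $1$) yields exactly the claimed formula. Alternatively one can view both sides as polynomials affine in each $z_i$ and identify the coefficient of $z_I$ by setting $z_i=0$ for $i\notin I$ and differentiating, which amounts to the same computation.
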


\begin{proof}[Proof of Proposition~\ref{prop:omega}]Let $r>0$ and $A\in\mathbb C^{n\times n}$. Observe that $\mu_E(A)\leq1/r$ iff $\|X\|\geq r$ for any $X\in E$ with $\det(\mathbb I_n-AX)=0$.

For $z=(z_1,\dots,z_s)\in\mathbb C^s$ define $\underline{z}=(\underbrace{z_1,\dots,z_1}_{r_1\times},\dots,\underbrace{z_s,\dots,z_s}_{r_s\times})\in\mathbb C^n$. Note that
\begin{equation}\label{eq:zi}
\underline{z}_I=z^{\alpha},\quad I\in\mathcal J^{|\alpha|}_{\alpha},\ z\in\mathbb C^s,\ \alpha\in A(r_1,\dots,r_s).
\end{equation}

For any $X\in E$ there is $z=(z_1,\dots,z_s)\in\mathbb C^s$ such that
\begin{equation}\label{eq:x}
X=\diag[\underbrace{z_1,\dots,z_1}_{r_1\times},\dots,\underbrace{z_s,\dots,z_s}_{r_s\times}]\in\mathbb C^{n\times n}.
\end{equation}
Lemma~\ref{lem:2.1} together with (\ref{eq:zi}) implies that for $X$ given by (\ref{eq:x}) we have
\begin{equation}\label{eq:det}
\det(\mathbb I_n-AX)=1+\sum_{j=1}^N(-1)^{|\alpha^j|}\left(\sum_{I\in\mathcal J^{|\alpha^j|}_{\alpha^j}}\det A_I\right)z^{\alpha^j},
\end{equation}
Hence, by (\ref{eq:det}), $\mu_E(A)\leq1/r$ iff the zero variety of the polynomial (\ref{eq:det}) in $z_1,\dots,z_s$ does not meet the open polydisc $(r\mathbb D)^s$.

Suppose that $\mu_E(A)<1$ and $x=\pi_E(A)$. For some $r>1$ we have $\mu_E(A)\leq 1/r$, and so the zero variety of (\ref{eq:det}) is disjoint from $(r\mathbb D)^s$ and, consequently, from $\overline{\mathbb D}^s$. Thus $x\in \mathbb E_E$.
\end{proof}

\begin{rem}(a) If $n=2$, $s=1$, i.e.~$E:=\{z\mathbb I_2\in\mathbb C^{2\times 2}:z\in\mathbb C\}$, then $N=2$ and
$$
\pi_E(A)=(\tr A,\det A),\quad A\in\mathbb C^{2\times 2}.
$$

(b) If $n=s=2$, i.e.~$E:=\{\diag[z_1,z_2]\in\mathbb C^{2\times 2}:z_1,z_2\in\mathbb C\}$, then $N=3$ and
$$
\pi_E(A)=(a_{1,1},a_{2,2},\det A),\quad A=[a_{j,k}]_{j,k=1}^2in\mathbb C^{2\times 2}.
$$

(c) More general, if $s=2$, $r_1=n-1$, $r_2=1$, i.e.~$E:=\{\diag[z_1\mathbb I_{n-1},z_2]\in\mathbb C^{n\times n}:z_1,z_2\in\mathbb C\}$, then $N=2n-1$ and
\begin{multline*}
\pi_E(A)=\left(\sum_{I\in\mathcal J^1:i_1\geq2}\det A_I,\dots,\sum_{I\in\mathcal J^{n-1}:i_1\geq2}\det A_I,\right.\\\left.\sum_{I\in\mathcal J^1:i_1=1}\det A_I,\dots,\sum_{I\in\mathcal J^n:i_1=1}\det A_I\right),\quad A\in\mathbb C^{n\times n}.
\end{multline*}

(d) Recall that $\pi_E(\Omega_{\mu_E})=\mathbb E_E$ for $\mathbb E_E\in\{\mathbb G_n,\mathbb E_n\}$, $n\geq2$. It is an open question whether this equality holds for general $\mathbb E_E$.

(e) About 15 years ago J.~Agler and N.~J.~Young in \cite{ay1999} devised a new approach to the Nevanlinna--Pick interpolation problem for $\Omega_{\mu_E}$. They reduced the given analytic interpolation problem for $\Omega_{\mu_E}$-valued functions with to one for $\mathbb G_2$-valued functions (if $n=2$ and $s=1$) or $\mathbb E$-valued functions (if $n=s=2$). Recently, G.~Bharali applied this reduction strategy in the case of $\mathbb E_n$-valued functions (if $s=2$, $r_1=n-1$, $r_2=1$). Previous attempts to find analysable instances of $\mu$-synthesis have led to the study of the symmetrized bidisc, the tetrablock and the $\mu_{1,n}$-quotients. First two of these domains have turned out to have interesting function-theoretic properties. The genesis of this paper was to examine to what extend properties of $\mathbb G_n$ and $\mathbb E$ are inherited by their natural generalizations such as $\mu_{1,n}$-quotients $\mathbb E_n$ or the co-called generalized tetrablocks $\mathbb E_E$.

(f) Observe that $n\leq N\leq2^n-1$. Moreover, if $s=1$ then $N=n$, whereas for $s=n$ we have $N=2^n-1$.

(g) Recall that one of two major effects of the idea introduced by J.~Agler and N.~J.~Young is the reduction in the dimensional complexity of the Nevanlinna--Pick interpolation problem for $\Omega_{\mu_E}$. (f) shows that this advantage disappears completely as the number of scalar blocks in $E$ increases. Moreover, the dimension may significantly increase when passing form $\Omega_{\mu_E}$ to $\mathbb E_E$ as $n^2\ll2^n-1$ for big $n$.
\end{rem}

\subsection{Geometry of the generalized tetrablock}

\begin{prop}\label{cor:balanced}$\mathbb E_E$ is bounded $(|\alpha^1|,\dots,|\alpha^N|)$-balanced domain.
\end{prop}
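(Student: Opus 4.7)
The statement decomposes into three pieces: $\mathbb E_E$ is open, is $(|\alpha^1|,\ldots,|\alpha^N|)$-balanced, and is bounded. Path-connectedness (hence connectedness) will come for free once the balanced property is established, because $R_0\equiv1$ shows $0\in\mathbb E_E$ and the orbit $\lambda\mapsto m_\lambda.x$ continuously joins any $x\in\mathbb E_E$ to the origin.

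I would dispose of openness and balancedness first, both essentially formalities. For openness: since $R_x$ is jointly continuous in $(x,z)$ and $\overline{\mathbb D}^s$ is compact, $x^0\in\mathbb E_E$ implies $\inf_{\overline{\mathbb D}^s}|R_{x^0}|>0$, and this positivity persists for $x$ near $x^0$. For the balanced property with $m=(|\alpha^1|,\ldots,|\alpha^N|)$, the key algebraic identity is
$$R_{m_\lambda.x}(z)=1+\sum_{j=1}^N(-1)^{|\alpha^j|}\lambda^{|\alpha^j|}x_j z^{\alpha^j}=R_x(\lambda z),$$
where $\lambda z:=(\lambda z_1,\ldots,\lambda z_s)$; this uses only $\lambda^{|\alpha^j|}z^{\alpha^j}=(\lambda z)^{\alpha^j}$. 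If $|\lambda|\le1$, then $\lambda z$ stays in $\overline{\mathbb D}^s$ as $z$ ranges there, so $x\in\mathbb E_E$ forces $m_\lambda.x\in\mathbb E_E$.

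The substantive part, which I expect to be the main obstacle, is boundedness. My plan is to collapse the multivariable non-vanishing condition to a one-variable one by a carefully chosen monomial substitution $z_k=\zeta^{e_k}$ that maps $\overline{\mathbb D}$ into $\overline{\mathbb D}^s$ and, crucially, separates all $N$ coefficients $x_j$ into independent monomials of the resulting one-variable polynomial. The mixed-radix weights
$$e_1:=1,\qquad e_k:=\prod_{l=1}^{k-1}(r_l+1)\quad(k\ge2)$$
do the job: for $\alpha\in A(r_1,\ldots,r_s)$, the pairing $\langle\alpha,e\rangle=\alpha_1+\alpha_2(r_1+1)+\cdots+\alpha_s\prod_{l<s}(r_l+1)$ is the standard base-$(r_l+1)$ representation, so as $\alpha$ varies over $A(r_1,\ldots,r_s)$, the values $\langle\alpha,e\rangle$ are pairwise distinct integers in $\{1,\ldots,N\}$. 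Consequently the one-variable polynomial
$$p_x(\zeta):=R_x(\zeta^{e_1},\ldots,\zeta^{e_s})=1+\sum_{j=1}^N(-1)^{|\alpha^j|}x_j\zeta^{\langle\alpha^j,e\rangle}$$
has degree at most $N$, exhibits each $x_j$ (up to sign) as the coefficient of a distinct monomial, and is zero-free on $\overline{\mathbb D}$ whenever $x\in\mathbb E_E$.

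The last step is a classical one-variable fact: if a polynomial $1+\sum_{k\ge1}c_k\zeta^k$ of degree $d$ has no zero in $\overline{\mathbb D}$, it factors as $\prod_{i=1}^d(1-\zeta/\zeta_i)$ with $|\zeta_i|>1$, and Vieta's formulas identify $c_k$ with $(-1)^k$ times the $k$-th elementary symmetric polynomial in $1/\zeta_1,\ldots,1/\zeta_d$, yielding $|c_k|\le\binom{d}{k}$. Applying this to $p_x$ gives
$$|x_j|\le\binom{N}{\langle\alpha^j,e\rangle}\le2^N\qquad(j=1,\ldots,N),$$
which is the required uniform bound. Everything else is mechanical; the only genuine design choice is the mixed-radix substitution that converts the multivariable non-vanishing condition into a one-variable one without losing any coordinate of $x$.
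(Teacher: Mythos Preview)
Your proof is correct and shares the paper's core strategy for all three parts. The balancedness argument is identical (the identity $R_{m_\lambda.x}(z)=R_x(\lambda z)$), and for boundedness both arguments collapse the multivariable non-vanishing to a one-variable condition via a monomial substitution $z_k=\zeta^{m_k}$ with weights chosen so that the exponents $\langle m,\alpha^j\rangle$ are pairwise distinct, then invoke the fact that a polynomial $1+\sum c_k\zeta^k$ with no zeros in $\overline{\mathbb D}$ has bounded coefficients.

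The execution differs in two useful ways. First, the paper leaves the weight vector existential and imposes the extra parity condition $(-1)^{|\alpha^j|}=(-1)^{\langle m,\alpha^j\rangle}$ so that the resulting coefficient vector literally lands in a symmetrized polydisc $\mathbb G_M$, whose boundedness is then quoted as known; you instead give the weights explicitly via the mixed-radix formula $e_k=\prod_{l<k}(r_l+1)$, which yields the minimal possible degree $N$ and requires no sign matching. Second, rather than citing $\mathbb G_M$, you derive the coefficient bound directly from the factorization and Vieta's formulas, obtaining the concrete estimate $|x_j|\le\binom{N}{\langle\alpha^j,e\rangle}\le 2^N$. Your version is thus more self-contained and explicit; the paper's is terser but leans on prior knowledge of the symmetrized polydisc.
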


\begin{proof}First we show that $\mathbb E_E$ is $|\alpha|$-balanced, where $|\alpha|:=(|\alpha^1|,\dots,|\alpha^N|)$. Take $x=(x_1,\dots,x_N)\in\mathbb E_E$ and $\lambda\in\overline{\mathbb D}$. Our aim is to show $|\alpha|_{\lambda}.x\in\mathbb E_E$, i.e.
$$
R_{|\alpha|_{\lambda}.x}(z)\neq0,\quad z=(z_1,\dots,z_s)\in\overline{\mathbb D}^s.
$$
But it is an immediate consequence of the following equality
$$
R_{|\alpha|_{\lambda}.x}(z)= 1+\sum_{j=1}^N(-1)^{|\alpha^j|}\lambda^{|\alpha^j|}x_jz^{\alpha^j}=1+\sum_{j=1}^N(-1)^{|\alpha^j|}x_j(\lambda z)^{\alpha^j}=R_x(\lambda z).
$$
It remains to observe that $R_x(\lambda z)\neq0$, since $\lambda z\in\overline{\mathbb D}^s$ for any $z\in\overline{\mathbb D}^s$. Thus $\mathbb E_E$ is $(|\alpha^1|,\dots,|\alpha^N|)$-balanced set.

Since $\mathbb E_E$ is open by definition, we conclude that $\mathbb E_E$ is $(|\alpha^1|,\dots,|\alpha^N|)$-balanced domain.

To see $\mathbb E_E$ is bounded we proceed as follows. Take $m=(m_1,\dots,m_s)\in\mathbb N^s$ such that
$$
z_0^{\langle m,\alpha^j\rangle}\neq z_0^{\langle m,\alpha^k\rangle},\quad j,k=1,\dots,N,\ j\neq k,\ z_0\in\overline{\mathbb D}\setminus\{0\},
$$
and
$$
(-1)^{|\alpha^j|}=(-1)^{\langle m,\alpha^j\rangle},\quad j=1,\dots,N.
$$
Put $z(z_0,m):=(z_0^{m_1},\dots,z_0^{m_s})$, $z_0\in\overline{\mathbb D}$. Take $(x_1,\dots,x_N)\in\mathbb E_E$. Since
$$
R_x(z(z_0,m))=1+\sum_{j=1}^N(-1)^{\langle m,\alpha^j\rangle}x_jz_0^{\langle m,\alpha^j\rangle}\neq0,\quad z_0\in\overline{\mathbb D},
$$
we conclude that $\widetilde x:=(\widetilde x_1,\dots,\widetilde x_M)\in\mathbb G_M$ with $M:=\max\{\langle m,\alpha^j\rangle:j=1,\dots,N\}$ and
$$
\widetilde x_k:=\begin{cases}x_k,\quad&\textnormal{if there is }j\textnormal{ such that }\langle m,\alpha^j\rangle=k\\0,\quad&\textnormal{otherwise}\end{cases},\quad k=1,\dots,M.
$$
The boundedness of the symmetrized polydisc $\mathbb G_M$ finishes the proof.
\end{proof}

Let
\begin{equation}\label{eq:e'}
E':=\{\diag[z_1\mathbb I_{r_1},\dots z_{s'}\mathbb I_{r_{s'}}]\in\mathbb C^{n'\times
n'}:z_1,\dots,z_{s'}\in\mathbb C\},
\end{equation}
for some $s'\leq s$ and $n':=\sum_{j=1}^{s'}r_j$. Let $N'$ be such that
$$
\alpha^j_{\nu}=0,\quad 1\leq j\leq N',\ s'<\nu\leq s,\qquad\textnormal{and}\qquad \alpha^{N'+1}_{s'+1}\neq0.
$$
Observe that $N'=\prod_{j=1}^{s'}(r_j+1)-1$. We define
$$
(\alpha^j)':=(\alpha^j_1,\dots,\alpha^j_{s'}),\quad j=1,\dots,N'.
$$
For $x\in\mathbb C^N$ write $x=(x',x'')\in\mathbb C^{N'}\times\mathbb C^{N''}$, where $N'':=N-N'$. For $z\in\mathbb C^s$ write $z=(z',z'')\in\mathbb C^{s'}\times\mathbb C^{s-s'}$. Finally, for $x'=(x_1,\dots,x_{N'})$ define
\begin{equation}\label{eq:R'}
R'_{x'}(z'):=1+\sum_{j=1}^{N'}(-1)^{|(\alpha^j)'|}x_j(z')^{(\alpha^j)'},\quad z'\in\mathbb C^{s'}.
\end{equation}
Then
$$
\mathbb E_{E'}=\left\{x'\in\mathbb C^{N'}:\forall_{z'\in\overline{\mathbb D}^{s'}}\ R'_{x'}(z')\neq0\right\}.
$$
Throughout the paper $E'$ will always denote the "subspace" (\ref{eq:e'}) of the space $E$ given by (\ref{eq:e}) unless stated otherwise. We start with elementary but crucial

\begin{thm}\label{thm:ret}The mappings
\begin{equation}\label{eq:ti}
\mathbb E_{E'}\ni x'\overset{\theta}\longmapsto(x',0)\in\mathbb E_E,\quad \mathbb E_E\ni(x',x'')\overset{\iota}\longmapsto x'\in\mathbb E_{E'},
\end{equation}
are well defined. In particular, $\mathbb E_{E'}$ is an analytic retract of $\mathbb E_E$. Moreover, $\mathbb E_E$ is a Hartogs domain over $\mathbb E_{E'}$ with $N''$-dimensional $m$-balanced fibers, where
$$
m=\left(\underbrace{|\alpha^{N'+1}|,\dots,|\alpha^{N'+1}|}_{(N'+1)\times},\dots, \underbrace{|\alpha^{M(N'+1)}|,\dots,|\alpha^{M(N'+1)}|}_{(N'+1)\times}\right)\in\mathbb Z^{N''},
$$
$M=\prod_{j=s'+1}^s(r_j+1)-1$.
\end{thm}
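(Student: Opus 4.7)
The plan is to verify each assertion by direct computation with the generating polynomial $R_x$ from (\ref{eq:R}), relying on the simple observation that a multi-index $\alpha^j$ has vanishing tail $(\alpha^j)''=0$ exactly when $j\le N'$.

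For the well-definedness of $\theta$ I would substitute $x''=0$ into $R_{(x',0)}(z',z'')$: every summand with $j>N'$ vanishes trivially, and for $j\le N'$ one has $z^{\alpha^j}=(z')^{(\alpha^j)'}$ because the tail is zero. What remains is precisely $R'_{x'}(z')$, which is nonzero on $\overline{\mathbb D}^s$ by assumption $x'\in\mathbb E_{E'}$. Symmetrically, the well-definedness of $\iota$ follows by plugging $z''=0$ into $R_x$: now every term with $j>N'$ dies (since $z^{\alpha^j}$ contains a factor from $z''$) and the surviving expression again equals $R'_{x'}(z')\ne 0$, so $\iota(x',x'')\in\mathbb E_{E'}$. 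The identity $\iota\circ\theta=\id_{\mathbb E_{E'}}$ is immediate, so $\mathbb E_{E'}$ is indeed an analytic retract of $\mathbb E_E$.

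The Hartogs/fibre part is concentrated in the single identity
$$R_{(x',\,m_\lambda.x'')}(z',z'')=R_x(z',\lambda z''),\qquad \lambda\in\mathbb C,$$
valid provided the $k$-th entry of $m$ equals $|(\alpha^{N'+k})''|$. Indeed, for $j>N'$ the coefficient $x_j$ is multiplied by $\lambda^{|(\alpha^j)''|}$, which combines with $(z'')^{(\alpha^j)''}$ into $(\lambda z'')^{(\alpha^j)''}$, while the first $N'$ summands are untouched since $(\alpha^j)''=0$ there. For $|\lambda|\le1$ the point $(z',\lambda z'')$ lies in $\overline{\mathbb D}^s$, so $x\in\mathbb E_E$ forces the right-hand side to be nonzero; hence $(x',m_\lambda.x'')\in\mathbb E_E$, giving the desired $m$-balanced character of the fibre over $x'$, and at the same time expressing $\mathbb E_E$ as a Hartogs domain over $\mathbb E_{E'}$ with $N''$-dimensional fibres.

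It remains to re-express $|(\alpha^{N'+k})''|$ in the explicit block form of the statement. The idea is to partition the tails $(\alpha^j)''$, $j>N'$, by the nonzero elements $\beta^1<\dots<\beta^M$ of $\{0,\dots,r_{s'+1}\}\times\dots\times\{0,\dots,r_s\}$; each $\beta^\ell$ occurs for exactly $N'+1$ values of $j$, since the first $s'$ coordinates of $\alpha^j$ remain unrestricted. Because the order (\ref{eq:order}) is governed by the last position of disagreement, these $N'+1$ indices are consecutive, and a short count places them at positions $\ell(N'+1),\dots,\ell(N'+1)+N'$. The minimal element of the $\ell$-th block is $\alpha^{\ell(N'+1)}=(0,\dots,0,\beta^\ell)$, so its weight equals $|\beta^\ell|=|(\alpha^j)''|$ for every $j$ in that block; consequently the $N'+1$ entries of $m$ coming from the $\ell$-th block all collapse to $|\alpha^{\ell(N'+1)}|$, exactly as stated.

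The only real obstacle is this last, purely combinatorial, step: one has to trust the reverse-style order (\ref{eq:order}) and verify carefully that it groups multi-indices with a fixed tail into consecutive runs of length $N'+1$, and that the first such run above index $N'$ indeed begins at position $\ell(N'+1)$. Once that bookkeeping is in place, the analytic content of the theorem—the two retract identities and the key substitution $(z',z'')\mapsto(z',\lambda z'')$—amounts to a one-line manipulation of the definition of $R_x$.
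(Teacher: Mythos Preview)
Your proposal is correct and follows essentially the same approach as the paper: both verify well-definedness of $\theta$ and $\iota$ by direct substitution ($x''=0$, respectively $z''=0$) into $R_x$, and both establish the $m$-balanced fibre property via the identity $R_{(x',m_\lambda.x'')}(z',z'')=R_x(z',\lambda z'')$. The only cosmetic difference is that the paper records this identity by first writing out the split formula (\ref{eq:Rsplit}), whereas you argue it termwise; and you supply slightly more detail on the combinatorial bookkeeping---showing explicitly that $\alpha^{\ell(N'+1)}=(0,\dots,0,\beta^\ell)$ so that $|(\alpha^{N'+k})''|=|\alpha^{\ell(N'+1)}|$ throughout the $\ell$-th block---whereas the paper simply asserts $|\beta^j|=|\alpha^{j(N'+1)}|$ and leaves this verification to the reader.
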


\begin{proof}
Let $x'\in\mathbb E_{E'}$. Consider the point $(x',0)\in\mathbb C^N$. Then
$$
R_{(x',0)}(z)=1+\sum_{j=1}^{N'}(-1)^{|\alpha^j|}x_jz^{\alpha^j}=1+\sum_{j=1}^{N'}(-1)^{|(\alpha^j)'|}x_j(z')^{(\alpha^j)'}=R'_{x'}(z'),
$$
for any $z=(z',z'')\in\mathbb C^s$. Consequently, since $R_{x'}(z')\neq0$ for any $z'\in\overline{\mathbb D}^{s'}$ then also $R'_{(x',0)}(z)\neq0$ for any $z=(z',z'')\in\overline{\mathbb D}^s$, i.e.~$(x',0)\in\mathbb E_E$. Hence $\theta$ is well defined.

Now take $x\in\mathbb E_E$. Directly from the definition of $\mathbb E_E$ it follows that $R_x(z',0)\neq0$ for all $z'\in\overline{\mathbb D}^{s'}$. Note that
$$
R_x(z',0)=1+\sum_{j=1}^{N'}(-1)^{|\alpha^j|}x_jz^{\alpha^j}=1+\sum_{j=1}^{N'}(-1)^{|(\alpha^j)'|}x_j(z')^{(\alpha^j)'}=R'_{x'}(z'),
$$
whence $x'\in\mathbb E_{E'}$, i.e.~$\iota$ is well defined, too.

So far we know that
$$
\mathbb E_E\cap(\mathbb C^{N'}\times\{0\}^{N''})=\mathbb E_{E'}\times\{0\}^{N''}.
$$
To see that $\mathbb E_E$ is a Hartogs domain over $\mathbb E_{E'}$ with $N''$-dimensional $m$-balanced fibers we proceed as follows. For $x'\in\mathbb E_{E'}$ define the fiber $D_{x'}:=\{x''\in\mathbb C^{N''}:(x',x'')\in\mathbb E_E\}$. It remains to see that $D_{x'}$ is $m$-balanced, $x'\in\mathbb E_{E'}$. Recall that
$$
N''=N-N'=\prod_{j=1}^{s'}(r_1+1)\left(\prod_{j=s'+1}^s(r_1+1)-1\right)=(N'+1)M.
$$
Let
$$
\beta^j:=(\alpha^{j(N'+1)}_{s'+1},\dots,\alpha^{j(N'+1)}_s),\quad j=1,\dots,M,
$$
and observe that $|\beta^j|=|\alpha^{j(N'+1)}|$, $j=1,\dots,M$.

Fix $x'\in\mathbb E_{E'}$ and $x''=(x_1,\dots,x_{N''})\in D_{x'}$. We aim at showing that
$$
|\beta|_{\lambda}.x''\in D_{x'},\quad \lambda\in\overline{\mathbb D},
$$
where $|\beta|:= (\underbrace{|\beta^1|,\dots,|\beta^1|}_{(N'+1)\times},\dots,\underbrace{|\beta^M|,\dots,|\beta^M|}_{(N'+1)\times})$. In other words, we want to show that
\begin{equation}\label{eq:Rbeta}
R_{(x',|\beta|_{\lambda}.x'')}(z)\neq0,\quad z\in\overline{\mathbb D}^s,\ \lambda\in\overline{\mathbb D}.
\end{equation}
But it is an immediate consequence of the following equality
\begin{equation}\label{eq:Rsplit}
R_x(z)=R'_{x'}(z')+\sum_{k=1}^M(-1)^{|\beta^k|}(z'')^{\beta^k}\sum_{j=0}^{N'}(-1)^{|(\alpha^j)'|}x_{k(N'+1)+j}(z')^{(\alpha^j)'}.
\end{equation}
Indeed, using (\ref{eq:Rsplit}) we get
\begin{align*}
R_{(x',|\beta|_{\lambda}.x'')}(z)& =R'_{x'}(z')+\sum_{k=1}^M(-1)^{|\beta^k|}(z'')^{\beta^k}\sum_{j=0}^{N'}(-1)^{|(\alpha^j)'|}\lambda^{|\beta^k|}x_{k(N'+1)+j}(z')^{(\alpha^j)'}\\ {}&=R'_{x'}(z')+\sum_{k=1}^M(-1)^{|\beta^k|}(\lambda z'')^{\beta^k}\sum_{j=0}^{N'}(-1)^{|(\alpha^j)'|}x_{k(N'+1)+j}(z')^{(\alpha^j)'}\\{}&=R_{(x',x'')}(z',\lambda z'').
\end{align*}
Hence and from the fact that $(z',\lambda z'')\in\overline{\mathbb D}^s$ for any $\lambda\in\overline{\mathbb D}$ and $(z',z'')\in\overline{\mathbb D}^s$ we get (\ref{eq:Rbeta}).
\end{proof}

\begin{rem}Note that in the above theorem instead of first $s'$ blocks $r_1,\dots,r_{s'}$ that define the subspace $E'$ one may take arbitrary subset $\{r_{j_1},\dots,r_{j_{s'}}\}$ of $\{r_1,\dots,r_s\}$.
\end{rem}

\begin{cor}\label{cor:retgn}$\mathbb G_{\max\{2,r_1,\dots,r_s\}}$ is an analytic retract of $\mathbb E_E$.
\end{cor}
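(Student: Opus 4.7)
The plan is to derive the corollary from Theorem~\ref{thm:ret} by choosing the subspace $E'$ optimally, with a small supplementary step in the degenerate case in which every block has size one. Throughout, I would use the remark following Theorem~\ref{thm:ret}, which permits $E'$ to consist of any subcollection of the blocks $\{r_1,\dots,r_s\}$. Put $r^*:=\max\{r_1,\dots,r_s\}$.

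If $r^*\geq 2$, I would pick $E':=\{z\mathbb I_{r^*}:z\in\mathbb C\}$, corresponding to any single block of maximal size. Then $\mathbb E_{E'}=\mathbb E_{r^*;1;r^*}=\mathbb G_{r^*}=\mathbb G_{\max\{2,r_1,\dots,r_s\}}$, and Theorem~\ref{thm:ret} directly presents it as an analytic retract of $\mathbb E_E$.

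If instead $r^*=1$, i.e.~$r_1=\dots=r_s=1$ and $s=n\geq 2$, then a single block yields only $\mathbb G_1=\mathbb D$, which is insufficient. Here I would take $E':=\{\diag[z_1,z_2]:z_1,z_2\in\mathbb C\}$ consisting of two of the size-$1$ blocks, so that Theorem~\ref{thm:ret} realizes the tetrablock $\mathbb E_{E'}=\mathbb E_{2;2;1,1}=\mathbb E$ as an analytic retract of $\mathbb E_E$. Since the composition of analytic retractions is again an analytic retraction, it then suffices to exhibit $\mathbb G_2$ as an analytic retract of $\mathbb E$.

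For this final step I would take $\iota:\mathbb E\to\mathbb G_2$, $(x_1,x_2,x_3)\mapsto(x_1+x_2,x_3)$, and $\theta:\mathbb G_2\to\mathbb E$, $(s,p)\mapsto(s/2,s/2,p)$. The identity $\iota\circ\theta=\id_{\mathbb G_2}$ is immediate, and well-definedness of $\iota$ follows by specializing $z_1=z_2$ in the defining polynomial $R_{(x_1,x_2,x_3)}$. The main obstacle is well-definedness of $\theta$: to verify $(s/2,s/2,p)\in\mathbb E$ for every $(s,p)\in\mathbb G_2$, I would invoke the matrix description $\mathbb E=\{(a_{11},a_{22},\det A):A\in\mathbb C^{2\times 2},\,\|A\|<1\}$ from~\cite{awy2007}. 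Writing $(s,p)=(z_1+z_2,z_1z_2)$ with $z_1,z_2\in\mathbb D$, I would look for $A=\begin{pmatrix}s/2 & b\\ c & s/2\end{pmatrix}$ with $bc=s^2/4-p$. A direct eigenvalue computation for $A^*A$ (using $\det(A^*A)=|p|^2$ and $\tr(A^*A)=|s|^2/2+|b|^2+|c|^2$, together with the $AM$--$GM$ bound $|b|^2+|c|^2\geq 2|bc|$) reduces the existence of such $b,c$ with $\|A\|<1$ to the inequality $|s|^2+|s^2-4p|<2+2|p|^2$, which via $s^2-4p=(z_1-z_2)^2$ and the parallelogram law rearranges to the transparent condition $(1-|z_1|^2)(1-|z_2|^2)>0$, clearly satisfied on $\mathbb G_2$.
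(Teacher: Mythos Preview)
Your argument is correct and follows essentially the same route as the paper: reduce to a single maximal block when $r^*\ge 2$, and otherwise pass through the tetrablock $\mathbb E$ and then to $\mathbb G_2$ via the very same pair $\theta(s,p)=(s/2,s/2,p)$, $\iota(x_1,x_2,x_3)=(x_1+x_2,x_3)$. The only difference is that the paper simply asserts these maps are well defined, whereas you actually verify $\theta(\mathbb G_2)\subset\mathbb E$ through the matrix description of $\mathbb E$ and the inequality $|s|^2+|s^2-4p|<2+2|p|^2$; this extra justification is sound and fills a gap the paper leaves implicit.
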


\begin{proof}If $s=1$ then $\mathbb E_E=\mathbb G_n$ and we are done. So assume that $s>1$. If there is $j$ with $r_j=\max\{2,r_1,\dots,r_s\}$, without loss of generality we may assume that $r_1=\max\{r_1,\dots,r_s\}$ and define $E':=\{z\mathbb I_{r_1}:z\in\mathbb C\}$. Then Theorem~\ref{thm:ret} implies that $\mathbb E_{E'}=\mathbb G_{r_1}$ is an analytic retract of $\mathbb E_E$.

Otherwise $s=n$ and $r_1=\dots=r_n=1$. Then we define $E':=\{\diag[z_1,z_2]:z_1,z_2\in\mathbb C\}$ and either $\mathbb E_E=\mathbb E_{E'}$ or Theorem~\ref{thm:ret} implies that $\mathbb E_{E'}=\mathbb E$ is an analytic retract of $\mathbb E_E$. Moreover, $\mathbb G_2$ is an analytic retract of $\mathbb E$. Indeed, to see this consider the analytic mappings
\begin{equation*}
\mathbb G_2\ni(s,p)\overset{\theta}\longmapsto\left(\frac{s}{2},\frac{s}{2},p\right)\in\mathbb E,\quad \mathbb E\ni(x_1,x_2,x_3)\overset{\iota}\longmapsto(x_1+x_2,x_3)\in\mathbb G_2,
\end{equation*}
whence $\iota\circ\theta=\id_{\mathbb G_2}$. Consequently, $\mathbb G_2$ is an analytic retract of $\mathbb E_E$.
\end{proof}

\begin{prop}\label{prop:ecl}Assume there is $j$ such that $r_j\geq3$. Then $l_{\mathbb E_E}$ is not a distance. In particular, $c_{\mathbb E_E}\not\equiv l_{\mathbb E_E}$.
\end{prop}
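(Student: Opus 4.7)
The plan is to combine the retract structure already established in Corollary~\ref{cor:retgn} with the triangle-inequality propagation principle of Proposition~\ref{prop:cl}, and to import a known failure of Lempert's theorem for the symmetrized polydisc in sufficiently high dimension.

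First, I would unpack the hypothesis: the assumption that some $r_j\geq3$ forces $M:=\max\{2,r_1,\dots,r_s\}\geq3$. Corollary~\ref{cor:retgn} then supplies explicit holomorphic maps $\theta\colon\mathbb G_M\longrightarrow\mathbb E_E$, $\iota\colon\mathbb E_E\longrightarrow\mathbb G_M$ with $\iota\circ\theta=\id_{\mathbb G_M}$, so $\mathbb G_M$ is an analytic retract of $\mathbb E_E$.

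Next I would invoke the known fact from the literature on the symmetrized polydisc (\cite{n2006}, \cite{nptz2008}, \cite{npz2007}) that for every $n\geq3$ the Lempert function $l_{\mathbb G_n}$ fails to be a pseudodistance, i.e.~$k_{\mathbb G_n}\not\equiv l_{\mathbb G_n}$. Applied with $n=M\geq3$, this gives a pair of points in $\mathbb G_M$ witnessing that $l_{\mathbb G_M}$ is not a distance. Proposition~\ref{prop:cl} applied to $G=\mathbb G_M$ and $D=\mathbb E_E$ then yields directly that $l_{\mathbb E_E}$ is not a distance, and in particular that $c_{\mathbb E_E}\not\equiv l_{\mathbb E_E}$.

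The substantive content is therefore entirely concentrated in the cited results on $\mathbb G_n$, $n\geq3$; the rest of the argument is a one-line appeal to the retract machinery of Section~\ref{sect:ar}. The only point requiring a little care is to make sure that what one imports from the symmetrized polydisc literature is exactly the non-pseudodistance property of $l_{\mathbb G_n}$ (and not merely the weaker statement $c_{\mathbb G_n}\not\equiv l_{\mathbb G_n}$), since Proposition~\ref{prop:cl} is formulated in terms of $l_G$ being a distance; this is indeed what the above references establish.
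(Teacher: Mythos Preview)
Your proof is correct and follows essentially the same route as the paper: the paper assumes without loss of generality $r_1\geq3$, applies Theorem~\ref{thm:ret} directly with $s'=1$ to obtain $\mathbb G_{r_1}$ as a retract, and then invokes Proposition~\ref{prop:cl} together with \cite{npz2007}. Your use of Corollary~\ref{cor:retgn} in place of a direct appeal to Theorem~\ref{thm:ret} is merely a packaging difference, since under the hypothesis $\max\{r_1,\dots,r_s\}\geq3$ that corollary reduces to exactly the same application of Theorem~\ref{thm:ret}.
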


\begin{proof}If $s=1$ then $\mathbb E_E=\mathbb G_n$, $n\geq3$, and we are done. So assume that $s>1$. Without loss of generality we may assume that $r_1\geq3$. We apply Theorem~\ref{thm:ret} to $E'$ with $s'=1$. Then we use Proposition~\ref{prop:cl} with $G=\mathbb E_{E'}=\mathbb G_{r_1}$, $D=\mathbb E_E$ and the fact that $l_{\mathbb G_{r_1}}$ is not a distance (cf.~\cite{npz2007}).
\end{proof}

Moreover, in some cases we get more precise information.

\begin{prop}\label{prop:eck}If there is $j$ such that $r_j=3$ then $c_{\mathbb E_E}(0,\cdot)\not\equiv k_{\mathbb E_E}(0,\cdot)$. In particular, $c_{\mathbb E_E}(0,\cdot)\not\equiv l_{\mathbb E_E}(0,\cdot)$.
\end{prop}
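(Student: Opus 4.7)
The plan is to mimic the proof of Proposition~\ref{prop:ecl}, upgrading the input ``$l_{\mathbb G_n}$ is not a distance'' on the model domain to the stronger ``$c_{\mathbb G_3}(0,\cdot)\not\equiv k_{\mathbb G_3}(0,\cdot)$''. The key feature to exploit is that the explicit retraction supplied by Theorem~\ref{thm:ret} sends $0$ to $0$, so that an inequality at the basepoint $0\in\mathbb E_{E'}$ transfers to the basepoint $0\in\mathbb E_E$; this is what pins the statement to the distinguished point $0$, rather than merely giving $c_{\mathbb E_E}\not\equiv k_{\mathbb E_E}$ globally.

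Concretely, I would first reduce to $r_1=3$ (after reordering the blocks, as permitted by the remark following Theorem~\ref{thm:ret}) and then apply Theorem~\ref{thm:ret} with $s'=1$. This produces $\mathbb E_{E'}=\mathbb G_3$ together with the explicit maps $\theta:\mathbb G_3\longrightarrow\mathbb E_E$, $\theta(x')=(x',0)$, and $\iota:\mathbb E_E\longrightarrow\mathbb G_3$, $\iota(x',x'')=x'$, satisfying $\iota\circ\theta=\id_{\mathbb G_3}$ and $\theta(0)=0$, $\iota(0)=0$.

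Next, I would apply the holomorphic contractibility of both $c$ and $k$ to $\theta$ and $\iota$ simultaneously. As in the proof of Proposition~\ref{prop:cl}, the identity $\iota\circ\theta=\id_{\mathbb G_3}$ pinches the inequalities, yielding $c_{\mathbb E_E}(0,\theta(x))=c_{\mathbb G_3}(0,x)$ and $k_{\mathbb E_E}(0,\theta(x))=k_{\mathbb G_3}(0,x)$ for every $x\in\mathbb G_3$. The only difference from Proposition~\ref{prop:cl} is that the argument is run for the pair $(c,k)$ instead of $(k,l)$ and evaluated specifically at $0$; this works because $\theta(0)=0$ and because holomorphic contractibility holds for both $c$ and $k$ in the same way.

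The conclusion is then immediate from the known fact that $c_{\mathbb G_3}(0,\cdot)\not\equiv k_{\mathbb G_3}(0,\cdot)$ (cf.~\cite{n2006}): any $x\in\mathbb G_3$ witnessing this inequality produces a witness $\theta(x)\in\mathbb E_E$ for $c_{\mathbb E_E}(0,\cdot)\not\equiv k_{\mathbb E_E}(0,\cdot)$, and a fortiori for $c_{\mathbb E_E}(0,\cdot)\not\equiv l_{\mathbb E_E}(0,\cdot)$. No real obstacle is anticipated; the only mildly delicate point is to make sure the cited $\mathbb G_3$ result is stated (or can be re-based, via an automorphism of $\mathbb G_3$ fixing $0$ composed with a suitable path) at the basepoint $0$, but this is exactly the form in which it appears in the relevant literature.
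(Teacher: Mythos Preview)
Your proof is correct and follows essentially the same route as the paper: reduce to $r_1=3$, apply Theorem~\ref{thm:ret} with $s'=1$ to exhibit $\mathbb G_3$ as a retract of $\mathbb E_E$ via maps fixing the origin, and transfer the known inequality $c_{\mathbb G_3}(0,\cdot)\not\equiv k_{\mathbb G_3}(0,\cdot)$ using holomorphic contractibility. The paper's own proof simply invokes Proposition~\ref{prop:cl} in spirit, whereas you spell out the sandwich argument for the pair $(c,k)$ at the basepoint $0$ and make explicit why $\theta(0)=0$ matters; your version is in fact the cleaner one on this point. One minor correction: the $\mathbb G_3$ result you need is in \cite{nptz2008}, not \cite{n2006}.
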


\begin{proof}If $s=1$ then $\mathbb E_E=\mathbb G_3$ and we are done. So assume that $s>1$. Without loss of generality we may assume that $r_1=3$. We apply Theorem~\ref{thm:ret} to $E'$ with $s'=1$. Then we use Proposition~\ref{prop:cl} with $G=\mathbb E_{E'}=\mathbb G_3$, $D=\mathbb E_E$ and the fact that $c_{\mathbb G_3}(0,\cdot)\not\equiv k_{\mathbb G_3}(0,\cdot)$ (cf.~\cite{nptz2008}).
\end{proof}

Now we are in position to prove the following

\begin{thm}\label{thm:e1}$\mathbb E_E$ cannot be exhausted by domains biholomorphic to convex ones.
\end{thm}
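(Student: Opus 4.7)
The plan is to apply the two abstract tools developed in Section~\ref{sect:ar} to a suitable analytic retract of $\mathbb E_E$, splitting into two complementary cases according to the block sizes $r_1,\dots,r_s$.

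First, if there exists $j$ with $r_j\geq 3$, then Proposition~\ref{prop:ecl} already supplies the fact that $l_{\mathbb E_E}$ is not a distance. The ``in particular'' clause of Proposition~\ref{prop:cl} then states explicitly that such a domain cannot be exhausted by domains biholomorphic to convex ones, so this case requires no additional work.

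Second, suppose $r_j\leq 2$ for every $j$, so that $\max\{2,r_1,\dots,r_s\}=2$. Corollary~\ref{cor:retgn} then tells us that the symmetrized bidisc $\mathbb G_2$ is an analytic retract of $\mathbb E_E$. I would now invoke Proposition~\ref{prop:x} with $G=\mathbb G_2$ and $D=\mathbb E_E$. The two hypotheses to verify are that $\mathbb G_2$ is taut, which is classical (it is bounded and hyperconvex, as recalled in the introduction), and that $\mathcal S(\mathbb G_2)=\varnothing$, which is precisely Edigarian's observation from \cite{e2013} that motivated the formulation of Proposition~\ref{prop:x} to begin with. Proposition~\ref{prop:x} then delivers the desired conclusion.

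The only point that needs any thought is why one cannot avoid the case split altogether: Corollary~\ref{cor:retgn} yields $\mathbb G_m$ as a retract only for $m=\max\{2,r_1,\dots,r_s\}$, whereas $\mathcal S(\mathbb G_m)=\varnothing$ is known in the literature only for $m=2$. Luckily, the condition $m\geq 3$ forces some $r_j\geq 3$, and it is exactly that regime in which Proposition~\ref{prop:ecl} supplies the alternative route through the Lempert function; so the two mechanisms dovetail and jointly cover all $\mathbb E_E$. No genuine obstacle arises in either case, as all the technical content has been packaged into the preparatory results.
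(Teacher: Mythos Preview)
Your proof is correct and follows essentially the same approach as the paper's own proof: the paper also splits into the case $\max\{r_1,\dots,r_s\}\geq 3$ (handled via Proposition~\ref{prop:ecl}) and the remaining case, where Corollary~\ref{cor:retgn} gives $\mathbb G_2$ as an analytic retract and Proposition~\ref{prop:x} is applied using tautness of $\mathbb G_2$ and $\mathcal S(\mathbb G_2)=\varnothing$ from \cite{e2013}. Your added closing paragraph explaining why the case split is unavoidable is a helpful gloss but does not constitute a different argument.
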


\begin{proof}In view of Proposition~\ref{prop:ecl} it suffices to consider $\max\{r_1,\dots,r_s\}\leq2$. From Corollary~\ref{cor:retgn} it follows that $\mathbb G_2$ is an analytic retract of $\mathbb E_E$. Moreover, $\mathbb G_2$ is taut and $\mathcal S(\mathbb G_2)=\varnothing$ (cf.~\cite{e2013}, Corollary 3). It remains to apply Proposition~\ref{prop:x}.
\end{proof}

We conclude this subsection with some further basic geometric properties of the generalized tetrablocks $\mathbb E_E$.

\begin{cor}\label{cor:circ}$\mathbb E_E$ is not circled.
\end{cor}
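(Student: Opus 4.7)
The plan is to reduce, via a \emph{linear} retraction, to the known-feeling fact that a symmetrized polydisc is not circled, and then to exhibit a concrete witness there. The first step is to invoke Corollary~\ref{cor:retgn}, which produces $\mathbb{G}_m$ with $m:=\max\{2,r_1,\dots,r_s\}\geq 2$ as an analytic retract of $\mathbb{E}_E$. What I would emphasize, and what is visible from the proofs although not stated in the corollary, is that the retraction maps furnished by Theorem~\ref{thm:ret} and by the $\mathbb{G}_2\hookrightarrow\mathbb{E}$ step of Corollary~\ref{cor:retgn} are all explicit \emph{linear} maps --- coordinate inclusions, coordinate projections, and the affine combinations $(s,p)\mapsto(s/2,s/2,p)$ and $(x_1,x_2,x_3)\mapsto(x_1+x_2,x_3)$. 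Their composition gives a linear retraction pair $\theta_0:\mathbb{G}_m\to\mathbb{E}_E$, $\iota_0:\mathbb{E}_E\to\mathbb{G}_m$.

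Next I would use the elementary observation that a linear image of a circled set is circled. Consequently, were $\mathbb{E}_E$ circled, then $\iota_0(\mathbb{E}_E)=\mathbb{G}_m$ would be circled as well, and it would suffice to contradict this. So the task reduces to exhibiting a concrete point $x\in\mathbb{G}_m$ together with some $\lambda\in\overline{\mathbb{D}}$ such that $\lambda x\notin\mathbb{G}_m$.

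For the witness I would fix $a\in(\sqrt{2}-1,1)$ and take $x:=(2a,a^2,0,\dots,0)\in\mathbb{C}^m$. A short calculation gives $R_x(z)=(1-az)^2$, whose only zero $1/a$ lies outside $\overline{\mathbb{D}}$, so $x\in\mathbb{G}_m$. On the other hand, $R_{-x}(z)=1+2az-a^2z^2$ vanishes at $z=(1-\sqrt{2})/a$, and by the choice of $a$ this lies in $\overline{\mathbb{D}}$; hence $-x\notin\mathbb{G}_m$, producing the desired contradiction with $\lambda=-1$.

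I do not anticipate a serious obstacle. The reduction step is immediate once one reads off the explicit formulas from Theorem~\ref{thm:ret} and the proof of Corollary~\ref{cor:retgn} and checks linearity in each of the three cases ($s=1$; $s\geq 2$ with some $r_j\geq 2$; $s=n$ with all $r_j=1$). The only real computation is the one-variable root-location argument comparing the double root of $(1-az)^2$ with the roots of its ``sign-flipped'' companion $1+2az-a^2z^2$, which is elementary.
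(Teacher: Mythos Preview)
Your argument is correct and follows the same route as the paper: reduce via Corollary~\ref{cor:retgn} to the fact that $\mathbb G_m$, $m\geq 2$, is not circled. The paper's one-line proof phrases the reduction as a coordinate-slice statement, asserting that (after a permutation) $(x,0)\in\mathbb E_E$ iff $x\in\mathbb G_n$ for some $n\geq 2$; this is literally what Theorem~\ref{thm:ret} gives when some $r_j\geq 2$, but is a bit imprecise in the case $s=n$, $r_1=\dots=r_n=1$, where the retraction to $\mathbb G_2$ passes through the tetrablock via $(x_1,x_2,x_3)\mapsto(x_1+x_2,x_3)$ rather than a coordinate projection. Your version avoids this by noting that in every case the retraction $\iota_0$ is \emph{linear}, so that $\iota_0(\mathbb E_E)=\mathbb G_m$ would inherit circledness from $\mathbb E_E$---a cleaner way to cover all three cases at once. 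You also supply an explicit witness $(2a,a^2,0,\dots,0)$ showing $\mathbb G_m$ is not circled, which the paper leaves as a bare assertion; your root computation is correct.
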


\begin{proof}Corollary~\ref{cor:retgn} implies that, after the permutation of the variables if necessary, there is $n\geq2$ such that $(x,0)\in\mathbb E_E$ iff $x\in\mathbb G_n$. It remains to observe that $\mathbb G_n$ is not circled.
\end{proof}

Recall that a domain $\mathbb D\subset\mathbb C^n$ is called (cf.~\cite{h1994}, \cite{aps2004})
\begin{itemize}
\item$\mathbb C$-\emph{convex} if for any affine complex line $L$ such that $L\cap D\neq\varnothing$, the set $L\cap D$ is connected and simply connected;
\item\emph{linearly convex} if its complement is a union of affine complex hyperplanes.
\end{itemize}
Note that any $\mathbb C$-convex domain is linearly convex.

\begin{prop}\label{prop:ncc}If there is $j$ such that $r_j\geq3$ then $\mathbb E_E$ is neither $\mathbb C$-convex nor starlike about the origin.
\end{prop}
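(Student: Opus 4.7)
The plan is to slice $\mathbb E_E$ down to a symmetrized polydisc $\mathbb G_m$ with $m\geq 3$ and observe that both asserted defects already occur in that slice. Without loss of generality $r_1\geq 3$. Taking $s'=1$ in Theorem~\ref{thm:ret} gives $\mathbb E_{E'}=\mathbb G_{r_1}$, and the retraction maps $(\theta,\iota)$ of (\ref{eq:ti}) in fact yield
$$
\mathbb E_E\cap L=\mathbb G_{r_1}\times\{0\}^{N-r_1},\qquad L:=\mathbb C^{r_1}\times\{0\}^{N-r_1},
$$
so $\mathbb G_{r_1}$ appears as the trace of $\mathbb E_E$ on a complex linear subspace through the origin; this is all I shall use.

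For the $\mathbb C$-convex part I would invoke the elementary fact that $\mathbb C$-convexity is inherited by intersections with affine complex subspaces: every complex affine line of $L$ is also a complex affine line of the ambient $\mathbb C^N$, and its intersection with $\mathbb E_E\cap L$ coincides with its intersection with $\mathbb E_E$, hence remains connected and simply connected. Thus $\mathbb C$-convexity of $\mathbb E_E$ would force $\mathbb G_{r_1}$ to be $\mathbb C$-convex---contradicting the Nikolov--Pflug--Zwonek theorem (cf.~\cite{npz2007}) that $\mathbb G_n$ is not $\mathbb C$-convex for any $n\geq 3$.

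For the starlikeness part, starlikeness about $0$ is trivially inherited by complex linear subspaces through $0$ (the segment $[0,x]$ lies in $L$ whenever $x$ does), so it suffices to refute starlikeness of $\mathbb G_{r_1}$ about the origin. The factorization $w^{r_1}-s_1w^{r_1-1}+s_2w^{r_1-2}-s_3w^{r_1-3}=w^{r_1-3}(w^3-s_1w^2+s_2w-s_3)$ gives $\mathbb G_{r_1}\cap(\mathbb C^3\times\{0\}^{r_1-3})=\mathbb G_3\times\{0\}^{r_1-3}$, which further reduces the problem to $\mathbb G_3$. There I would exhibit an explicit obstruction. For $r\in(\sqrt 3/2,1)$ set $(s_1,s_2,s_3):=(3r,3r^2,r^3)\in\mathbb G_3$, the image of the diagonal point $(r,r,r)$ under the elementary symmetric map. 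Using the identity $1-3rz+3r^2z^2-r^3z^3=(1-rz)^3$, the condition $t(s_1,s_2,s_3)\in\mathbb G_3$ becomes the nonvanishing of $1-t+t(1-rz)^3$ on $\overline{\mathbb D}$. For $t=8/9$ one has $(1-rz)^3=-1/8$; the non-real cube roots $1/4\pm i\sqrt 3/4$ yield $rz=3/4\mp i\sqrt 3/4$ with $|rz|=\sqrt 3/2<r$, hence $|z|<1$, and starlikeness fails.

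The only non-routine step is this last root-finding computation in $\mathbb G_3$; everything else is a direct combination of Theorem~\ref{thm:ret} with the standard inheritance of $\mathbb C$-convexity and of starlikeness under appropriate slicing.
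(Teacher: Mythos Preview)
Your argument is correct and follows essentially the same route as the paper: slice by $\mathbb C^{r_1}\times\{0\}^{N-r_1}$ via Theorem~\ref{thm:ret} to land in $\mathbb G_{r_1}$, then use that $\mathbb C$-convexity and starlikeness about $0$ pass to linear sections, reducing to known failures for the symmetrized polydisc. The paper simply cites \cite{npz2008} for both the non-$\mathbb C$-convexity and the non-starlikeness of $\mathbb G_{r_1}$, $r_1\geq 3$; you instead reduce further to $\mathbb G_3$ and give an explicit counterexample $(3r,3r^2,r^3)$, $t=8/9$, to starlikeness. Your computation checks out: $1-t+t(1-rz)^3=0$ forces $(1-rz)^3=-1/8$, and the non-real cube roots give $|rz|=\sqrt 3/2<r$, so a root lands in $\mathbb D$. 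This self-contained verification is a nice bonus over a bare citation.

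One bibliographic slip: the Nikolov--Pflug--Zwonek result that $\mathbb G_n$ is not $\mathbb C$-convex for $n\geq 3$ is \cite{npz2008}, not \cite{npz2007} (the latter concerns the Lempert function).
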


\begin{proof}Without loss of generality we may assume that $r_1\geq3$. Let $E'$ be given by (\ref{eq:e'}) with $s'=1$. It follows from Theorem~\ref{thm:ret} that
\begin{equation}\label{eq:section}
\mathbb E_E\cap(\mathbb C^{r_1}\times\{0\}^{N-r_1})=\mathbb G_{r_1}\times\{0\}^{N-r_1}.
\end{equation}
Since $\mathbb G_{r_1}$ is not $\mathbb C$-convex (cf.~\cite{npz2008}), there is an affine complex line $L'\subset\mathbb C^{r_1}$ such that $L'\cap\mathbb G_{r_1}\neq\varnothing$ and the set $L'\cap\mathbb G_{r_1}$ either is not connected or is not simply connected. Consequently, $L:=L'\times\{0\}^{N-r_1}\subset\mathbb C^N$ is an affine complex line such that $L\cap\mathbb E_E\neq\varnothing$ and the set $L\cap\mathbb E_E$ either is not connected or is not simply connected.

To see $\mathbb E_E$ is not starlike about the origin, use (\ref{eq:section}) and the fact that $\mathbb G_{r_1}$ is not starlike about the origin (cf.~\cite{npz2008}).
\end{proof}

In \cite{y2008} N.~J.~Young showed that $\mathbb E$ is not an analytic retract of the open unit ball of a $\mathcal J^*$-algebra of finite rank (see \cite{h1973} for a definition of a $J^*$-algebra). By careful analysis of Young's proof \L.~Kosi\'nski showed in \cite{k2014} that the same property holds for $\mathbb G_2$ (and hence for $\mathcal P$, since $\mathbb G_2$ is an analytic retract of $\mathcal P$). Consequently, we get

\begin{prop}\label{prop:ar}Assume that there is $j$ such that $r_j=2$ or there are $j,k$, $j\neq k$, such that $r_j=r_k=1$. Then $\mathbb E_E$ is not an analytic retract of the open unit ball of a $\mathcal J^*$-algebra of finite rank.
\end{prop}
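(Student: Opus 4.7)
The plan is to reduce to the two previously known results: Young's theorem \cite{y2008} that $\mathbb{E}$ is not an analytic retract of the open unit ball of a $\mathcal{J}^*$-algebra of finite rank, and Kosi\'nski's theorem \cite{k2014} giving the same for $\mathbb{G}_2$. The central observation is that the relation ``is an analytic retract of'' is transitive: if $A$ is an analytic retract of $B$ via maps $\theta_1,\iota_1$, and $B$ is an analytic retract of $C$ via $\theta_2,\iota_2$, then $\theta_2\circ\theta_1$ and $\iota_1\circ\iota_2$ exhibit $A$ as an analytic retract of $C$.

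First I would split into the two hypothesis cases and, in each, identify a suitable $E'\subset E$ via Theorem~\ref{thm:ret} (using the remark that allows an arbitrary subset of the blocks $\{r_1,\dots,r_s\}$ to play the role of the first $s'$ blocks). If some $r_j=2$, I would take $E'$ to consist of only this single block; then $\mathbb{E}_{E'}=\mathbb{E}_{2;1;2}=\mathbb{G}_2$ is an analytic retract of $\mathbb{E}_E$. If instead there exist $j\neq k$ with $r_j=r_k=1$, I would take $E'$ to consist of these two size-one blocks; then $\mathbb{E}_{E'}=\mathbb{E}_{2;2;1,1}=\mathbb{E}$ is an analytic retract of $\mathbb{E}_E$.

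Suppose now, for contradiction, that $\mathbb{E}_E$ were an analytic retract of the open unit ball of some $\mathcal{J}^*$-algebra of finite rank, realized by maps $\Theta,\mathrm{I}$. Composing these with the retract maps constructed in the previous paragraph (in the order dictated by transitivity) would exhibit $\mathbb{G}_2$ (in the first case) or $\mathbb{E}$ (in the second case) as an analytic retract of that same ball, contradicting Kosi\'nski's or Young's theorem respectively.

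The argument is essentially formal once Theorem~\ref{thm:ret} is in hand; there is no substantive obstacle. The only delicate points are invoking the remark after Theorem~\ref{thm:ret} to pick the correct subset of blocks and verifying that $\mathbb{E}_{2;1;2}$ and $\mathbb{E}_{2;2;1,1}$ do coincide with $\mathbb{G}_2$ and $\mathbb{E}$ respectively, both of which are recorded in the remark following the definition of $\mathbb{E}_{n;s;r_1,\dots,r_s}$.
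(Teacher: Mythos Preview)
Your proof is correct and follows exactly the approach the paper intends: the proposition is stated as an immediate ``Consequently'' of the paragraph preceding it, which records Young's result for $\mathbb E$, Kosi\'nski's for $\mathbb G_2$, and the transitivity of the retract relation (illustrated parenthetically with $\mathcal P$). Your explicit case split, invoking Theorem~\ref{thm:ret} together with the remark after it to extract $\mathbb G_2$ or $\mathbb E$ as a retract of $\mathbb E_E$, and then applying transitivity, is precisely the argument the paper leaves implicit.
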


\subsection{The case $r_2=\dots=r_s=1$}Assume $s\geq2$. Let $E'$ be defined as in (\ref{eq:e'}) with $s'=s-1$. We write the polynomial (\ref{eq:R}) defining $\mathbb E_E$ in the form
$$
R_x(z)=R'_{x'}(z')-z_sP_{x''}(z')
$$
and define the rational function
$$
\Psi_{z'}(x):=\frac{P_{x''}(z')}{R'_{x'}(z')},\quad x=(x',x'')\in\mathbb C^{N'}\times\mathbb C^{N''},\ z'\in\mathbb C^{s-1},\ R'_{x'}(z')\neq0.
$$

There is the following immediate characterization of such $\mathbb E_E$, analogous to the one for the $\mu_{1,n}$-quotient $\mathbb E_n$ (cf.~\cite{awy2007} and \cite{b2014}).

\begin{prop}\label{prop:genbha}Let $s\geq2$, $r_s=1$, $E'$ be as in (\ref{eq:e'}) with $s'=s-1$, and let $(x',x'')\in\mathbb C^{N'}\times\mathbb C^{N''}$. Then the following are equivalent
\begin{enumerate}
\item[(i)]$(x',x'')\in\mathbb E_E$;
\item[(ii)]$x'\in\mathbb E_{E'}$, the function $z'\mapsto\Psi_{z'}(x',x'')$ is holomorphic on $\mathbb D^{s-1}$, continuous on $\overline{\mathbb D}^{s-1}$, and
$$
\max_{z'\in\overline{\mathbb D}^{s-1}}|\Psi_{z'}(x',x'')|=\max_{z'\in\mathbb T^{s-1}}|\Psi_{z'}(x',x'')|<1.
$$
\end{enumerate}
\end{prop}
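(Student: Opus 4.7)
The plan is to unwind the equivalence directly from the decomposition $R_x(z)=R'_{x'}(z')-z_sP_{x''}(z')$, treating the last variable $z_s$ separately from $z'\in\overline{\mathbb{D}}^{s-1}$. The whole point is that for fixed $z'$ the polynomial $R_x(z',\cdot)$ is linear in $z_s$, so its vanishing in $\overline{\mathbb{D}}$ is completely controlled by the size of $\Psi_{z'}$. Everything should reduce to a slice-by-slice argument plus the maximum principle.

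For the implication (i)$\Rightarrow$(ii), I would first specialize $z_s=0$ in the inequality $R_x(z',z_s)\neq 0$ on $\overline{\mathbb{D}}^s$ to obtain $R'_{x'}(z')\neq 0$ on $\overline{\mathbb{D}}^{s-1}$, giving $x'\in\mathbb{E}_{E'}$. Because the denominator of $\Psi_{z'}(x',x'')$ is a polynomial in $z'$ that never vanishes on the compact set $\overline{\mathbb{D}}^{s-1}$, the rational function $z'\mapsto\Psi_{z'}(x',x'')$ extends holomorphically to a neighborhood of $\overline{\mathbb{D}}^{s-1}$; in particular it is holomorphic on $\mathbb{D}^{s-1}$ and continuous on $\overline{\mathbb{D}}^{s-1}$. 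To get the modulus bound, fix $z'\in\overline{\mathbb{D}}^{s-1}$. If $P_{x''}(z')=0$, then $\Psi_{z'}=0$. Otherwise, writing $R_x(z',z_s)=R'_{x'}(z')\bigl(1-z_s\Psi_{z'}(x',x'')\bigr)$, the assumption that $R_x$ has no zero in $\overline{\mathbb{D}}^s$ forbids $z_s=1/\Psi_{z'}(x',x'')$ from lying in $\overline{\mathbb{D}}$, which forces $|\Psi_{z'}(x',x'')|<1$. Continuity on the compact polydisc gives that the strict bound is uniform, and the equality with the maximum over $\mathbb{T}^{s-1}$ follows from the maximum modulus principle applied coordinatewise on $\mathbb{D}^{s-1}$.

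For (ii)$\Rightarrow$(i), I would simply use the factorization
\begin{equation*}
R_x(z',z_s)=R'_{x'}(z')\bigl(1-z_s\Psi_{z'}(x',x'')\bigr),\quad z'\in\overline{\mathbb{D}}^{s-1},\ z_s\in\overline{\mathbb{D}}.
\end{equation*}
Since $x'\in\mathbb{E}_{E'}$, the first factor is nonzero on $\overline{\mathbb{D}}^{s-1}$. Since $|z_s|\leq 1$ and $|\Psi_{z'}(x',x'')|<1$ for every $z'\in\overline{\mathbb{D}}^{s-1}$, the second factor never vanishes. Hence $R_x(z)\neq0$ on $\overline{\mathbb{D}}^s$ and $(x',x'')\in\mathbb{E}_E$.

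The main obstacle — really a bookkeeping point rather than a genuine difficulty — is making the passage from "no zero in $\overline{\mathbb{D}}$" to the strict inequality $|\Psi_{z'}|<1$ correctly in the degenerate case $P_{x''}(z')=0$, and justifying that $\Psi_{z'}(x',x'')$ is a bona fide holomorphic/continuous function on the polydisc so that the maximum principle can be invoked. Both are handled by the observation that $R'_{x'}$ is zero-free on $\overline{\mathbb{D}}^{s-1}$, which is immediate once (i) has been unpacked at $z_s=0$.
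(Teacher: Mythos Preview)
Your proposal is correct and follows essentially the same route as the paper's proof, which is a terse one-liner rewriting (i) as $R'_{x'}(z')\neq z_sP_{x''}(z')$ for all $(z',z_s)\in\overline{\mathbb D}^{s-1}\times\overline{\mathbb D}$ and reading off $x'\in\mathbb E_{E'}$ together with $1\neq z_s\Psi_{z'}(x',x'')$. You simply unpack these steps with more care (the holomorphy/continuity of $\Psi_{z'}$, the degenerate case $P_{x''}(z')=0$, and the maximum principle on $\mathbb T^{s-1}$), none of which the paper spells out.
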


\begin{proof}Condition (i) is equivalent to
$$
R'_{x'}(z')\neq z_sP_{x''}(z'),\quad z'\in\overline{\mathbb D}^{s-1},\ z_s\in\overline{\mathbb D},
$$
 i.e.~$x'\in\mathbb E_{E'}$ and $1\neq z_s\Psi_{z'}(x',x'')$ for all $z'\in\overline{\mathbb D}^{s-1}$, which is equivalent to (ii).
\end{proof}

Let $\overline{\mathbb E}_E$ denote the closure of $\mathbb E_E$. Similarly we obtain

\begin{prop}\label{prop:clo}Let $s\geq2$, $r_2=\dots=r_s=1$, $E'$ be as in (\ref{eq:e'}) with $s'=s-1$, and let $x=(x',x'')\in\mathbb C^{N'}\times\mathbb C^{N''}$. Then the following are equivalent
\begin{enumerate}
\item[(i)]$R_x(z)\neq0$ for any $z\in\mathbb D^s$;
\item[(ii)]$x\in\overline{\mathbb E}_E$;
\item[(iii)]$x'\in\overline{\mathbb E}_{E'}$, the function $z'\mapsto\Psi_{z'}(x',x'')$ is holomorphic on $\mathbb D^{s-1}$, and
    $$
    \sup_{z'\in\mathbb D^{s-1}}|\Psi_{z'}(x',x'')|\leq1.
    $$
\end{enumerate}
\end{prop}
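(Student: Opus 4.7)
My plan is to prove the cycle (ii)$\Rightarrow$(i)$\Rightarrow$(iii)$\Rightarrow$(ii), using two recurring tools: Hurwitz's theorem, to pass from sequences of polynomials zero-free on $\overline{\mathbb D}^s$ to a limit zero-free on $\mathbb D^s$, and the quasibalanced dilation calculation from Proposition~\ref{cor:balanced}, to produce nearby points of $\mathbb E_E$ from a given $x$.

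For (ii)$\Rightarrow$(i), I would pick a sequence $x^{(k)}\in\mathbb E_E$ with $x^{(k)}\to x$. Each polynomial $R_{x^{(k)}}$ is zero-free on $\overline{\mathbb D}^s$, hence on the connected domain $\mathbb D^s$, and $R_{x^{(k)}}\to R_x$ locally uniformly on $\mathbb C^s$; since $R_x(0)=1\neq 0$, Hurwitz's theorem gives $R_x\neq 0$ on $\mathbb D^s$. For the implication (i)$\Rightarrow$(ii) (which will also be reused below) I would mimic the proof of Proposition~\ref{cor:balanced}: setting $x^r:=|\alpha|_r.x$ for $r\in[0,1)$ we have $R_{x^r}(z)=R_x(rz)$, which by (i) does not vanish on $\overline{\mathbb D}^s$ because $rz\in\mathbb D^s$; thus $x^r\in\mathbb E_E$ and $x^r\to x$ as $r\to 1^-$, yielding $x\in\overline{\mathbb E}_E$.

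For (i)$\Leftrightarrow$(iii), the driving identity is the factorization $R_x(z)=R'_{x'}(z')\bigl(1-z_s\Psi_{z'}(x)\bigr)$, valid wherever $R'_{x'}(z')\neq 0$. Assuming (i), specializing $z_s=0$ gives $R'_{x'}\neq 0$ on $\mathbb D^{s-1}$, so $\Psi_{z'}(x)$ is holomorphic there; the same dilation argument applied to $x'$ with exponents $(|(\alpha^j)'|)_{j=1}^{N'}$ produces $x'^r\in\mathbb E_{E'}$ converging to $x'$, hence $x'\in\overline{\mathbb E}_{E'}$. Condition (i) then rewrites as $z_s\Psi_{z'}(x)\neq 1$ for all $(z',z_s)\in\mathbb D^{s-1}\times\mathbb D$, which is equivalent to $|\Psi_{z'}(x)|\leq 1$ on $\mathbb D^{s-1}$: if $|\Psi_{z'_0}(x)|>1$ for some $z'_0$, then $z_s=1/\Psi_{z'_0}(x)\in\mathbb D$ produces a zero. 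Conversely, assuming (iii), the Hurwitz argument from the first paragraph applied to an approximating sequence in $\mathbb E_{E'}$ forces $R'_{x'}\neq 0$ on $\mathbb D^{s-1}$; the factorization then combines $R'_{x'}\neq 0$ with $|z_s\Psi_{z'}(x)|<1$ on $\mathbb D^s$ to give $R_x\neq 0$ there, which is (i).

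The only delicacy I expect is avoiding a circular appeal to Proposition~\ref{prop:clo} itself when handling $\overline{\mathbb E}_{E'}$ on the right-hand side of (iii). Both directions sidestep this cleanly: (i)$\Rightarrow$(iii) gets $x'\in\overline{\mathbb E}_{E'}$ by the intrinsic $|\alpha'|$-balanced dilation on $E'$, while (iii)$\Rightarrow$(i) only needs the weak consequence that $R'_{x'}$ is zero-free on the open polydisc, which Hurwitz delivers from any approximating sequence. A second small point is why the equivalence "$1-z_s\Psi_{z'}(x)\neq 0$ on $\mathbb D^s$" $\Leftrightarrow$ "$\sup|\Psi_{z'}(x)|\leq 1$" yields a weak rather than a strict bound; this is exactly because $z_s$ ranges over the open disc $\mathbb D$, so the reciprocal construction above forces $|\Psi|\leq 1$ but allows equality.
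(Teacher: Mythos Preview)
Your argument is correct, and in one place it is cleaner than the paper's. The paper also uses the dilation $|\alpha|_r.x$ for (i)$\Rightarrow$(ii), and handles (ii)$\Leftrightarrow$(iii) by the same $R_x(z)=R'_{x'}(z')-z_sP_{x''}(z')$ calculus you spell out (it simply refers back to Proposition~\ref{prop:genbha}). The genuine difference is (ii)$\Rightarrow$(i): the paper proves it by induction on the number of blocks~$s$, starting from the known description of $\overline{\mathbb G}_{r_1}$ and using at each step that $x'\in\overline{\mathbb E}_{E_k}$ forces $R'_{x'}(z')\neq 0$ on $\mathbb D^k$, then arguing by contradiction via $|\Psi_{z'}|\leq 1$ on limits. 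Your Hurwitz argument replaces this entire induction: the $R_{x^{(k)}}$ are zero-free on $\mathbb D^s$, converge locally uniformly to $R_x$, and $R_x(0)=1$, so $R_x$ is zero-free on $\mathbb D^s$. This is shorter, and it has the pleasant side effect that the equivalence (i)$\Leftrightarrow$(ii) you obtain does not use the hypothesis $r_2=\dots=r_s=1$ at all; only (iii) needs the last block to be scalar. What the paper's inductive route buys instead is that it stays entirely within the $\Psi$-function framework already set up for Proposition~\ref{prop:genbha} and reused downstream in Propositions~\ref{prop:lc} and~\ref{prop:pc}, so no extra tool (the several-variable Hurwitz theorem) needs to be invoked.
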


\begin{proof}(i)$\Rightarrow$(ii) Take $z\in\overline{\mathbb D}^s$. Since $rz\in\mathbb D^s$ for any $r\in(0,1)$, (i) implies $R_x(rz)\neq0$. Recall that $R_x(rz)=R_{|\alpha|_r.x}(z)$, whence $|\alpha|_r.x\in\mathbb E_E$ for $r\in(0,1)$, i.e.~$x\in\overline{E}_E$.

(ii)$\Rightarrow$(i) Using induction on $k$ we prove that for any $k=2,\dots,n$
\begin{equation}\label{eq:impl}
x\in\overline{E}_{E_k}\Rightarrow R_x(z)\neq0\textnormal{ for any }z\in\mathbb D^k,
\end{equation}
where
\begin{equation}\label{eq:ek}
E_k=\{\diag[z_1\mathbb I_{r_1},z_2,\dots,z_k]:z_1,\dots,z_k\in\mathbb C\},\quad k=2,\dots,s.
\end{equation}

Take $k=2$ and suppose $x\in\overline{E}_{E_2}$ but $0=R_x(z)=R'_{x'}(z_1)-z_2P_{x''}(z_1)$ for some $z_1,z_2\in\mathbb D$. Then $x'\in\overline{\mathbb G}_{r_1}$, i.e.~$R'_{x'}(z_1)\neq0$. Consequently, $1=z_2\Psi_{z_1}(x)$ and so $|\Psi_{z_1}(x)|>1$. However, $|\Psi_{z_1}(y)|<1$ for all $y\in\mathbb E_{E_2}$ and since $x$ is a limit point of such $y$ we have $|\Psi_{z_1}(x)|\leq1$, a contradiction.

Now fix $2\leq k<n$ and assume the implication (\ref{eq:impl}) holds for $k$. Suppose $x\in\overline{E}_{E_{k+1}}$ but $0=R_x(z)=R'_{x'}(z')-z_{k+1}P_{x''}(z')$ for some $z'\in\mathbb D^k$, $z_{k+1}\in\mathbb D$. Then $x'\in\overline{\mathbb E}_{E_k}$, i.e.~$R'_{x'}(z')\neq0$ by inductive assumption. Consequently, we may proceed as in the the case $k=2$.

The proof of (ii)$\Leftrightarrow$(iii) is much as for Proposition~\ref{prop:genbha}.
\end{proof}

Argument used in \cite{k2014} to prove linear convexity of the pentablock allows us to get

\begin{prop}\label{prop:lc}If $r_2=\dots=r_s=1$ then $\mathbb E_E$ is linearly convex and, consequently, pseudoconvex.
\end{prop}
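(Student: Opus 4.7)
The plan is to prove linear convexity by exhibiting, for every point $a\in\mathbb C^N\setminus\mathbb E_E$, an explicit affine complex hyperplane through $a$ that is disjoint from $\mathbb E_E$. The key observation is that, for each fixed $z\in\overline{\mathbb D}^s$, the defining polynomial $R_x(z)$, viewed as a function of $x\in\mathbb C^N$, is \emph{affine} with constant term $1$:
$$
R_x(z)=1+\sum_{j=1}^{N}(-1)^{|\alpha^j|}z^{\alpha^j}\,x_j.
$$
Hence the zero set $H_z:=\{y\in\mathbb C^N:R_y(z)=0\}$ is either empty or a proper affine complex hyperplane (it cannot be all of $\mathbb C^N$, since the constant term is $1\neq 0$).

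Given $a\in\mathbb C^N\setminus\mathbb E_E$, the definition of $\mathbb E_E$ supplies some $z_0\in\overline{\mathbb D}^s$ with $R_a(z_0)=0$; this simultaneously shows that $H_{z_0}$ is nonempty (so it is a genuine hyperplane) and that $a\in H_{z_0}$. For every $y\in H_{z_0}$ we have $R_y(z_0)=0$ with the \emph{same} $z_0\in\overline{\mathbb D}^s$, whence $y\notin\mathbb E_E$; that is, $H_{z_0}\subset\mathbb C^N\setminus\mathbb E_E$. Thus the complement of $\mathbb E_E$ is a union of affine complex hyperplanes, which is the definition of linear convexity.

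Pseudoconvexity then follows from the standard implication that linearly convex domains are pseudoconvex: for every boundary point $a$ and associated $z_0$, the linear form $L(y):=R_y(z_0)$ does not vanish on $\mathbb E_E$, so $-\log|L|$ is a plurisubharmonic barrier on $\mathbb E_E$ tending to $+\infty$ at $a$. This yields local pseudoconvexity at every boundary point and, by the Kiselman/Oka-type criterion, global pseudoconvexity.

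The only real obstacle is conceptual: the argument above nowhere uses the hypothesis $r_2=\cdots=r_s=1$ and in fact proves linear convexity of $\mathbb E_E$ for arbitrary admissible $(r_1,\dots,r_s)$, consistent with the known linear convexity of $\mathbb G_n=\mathbb E_{n;1;n}$. I suspect the stated hypothesis is an artefact of the authors mirroring Kosi\'nski's pentablock argument in \cite{k2014}, which extracts the critical $z_0$ via the rational function $\Psi_{z'}$ of Propositions~\ref{prop:genbha} and~\ref{prop:clo} (specifically, from $|\Psi_{z'_0}(a)|\geq1$ one recovers $z_0=(z'_0,1/\Psi_{z'_0}(a))\in\overline{\mathbb D}^s$ satisfying $R_a(z_0)=0$); that route requires the last block to be scalar so that $\Psi_{z'}$ is defined. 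Either path yields the conclusion, but the affine-in-$x$ observation gives the shortest argument.
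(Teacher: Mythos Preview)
Your proof is correct and is genuinely simpler than the paper's. The key observation that $R_x(z)$ is \emph{affine} in $x$ with constant term $1$ is exactly right, and it immediately gives the separating hyperplane $H_{z_0}=\{y:R_y(z_0)=0\}$ for any $a\notin\mathbb E_E$; your remark that $H_{z_0}$ is a genuine hyperplane (not all of $\mathbb C^N$) because the constant term is $1$ is the only subtlety, and you handled it. Your suspicion about the hypothesis is also correct: the argument works for arbitrary $r_1,\dots,r_s$, so linear convexity of $\mathbb E_E$ holds in full generality.

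By contrast, the paper proceeds by induction on the number of blocks, writing $\mathbb E_{E_{k+1}}$ as a Hartogs domain over $\mathbb E_{E_k}$ and splitting into two cases according to whether the projection $x_0'$ lies in $\mathbb E_{E_k}$ or not. In the first case it lifts a separating hyperplane from the base; in the second it uses Proposition~\ref{prop:genbha} to find $z'\in\overline{\mathbb D}^{s-1}$ and $\omega\in\mathbb C\setminus\mathbb D$ with $\Psi_{z'}(x_0)=\omega$, and takes $\{\Psi_{z'}(\cdot)=\omega\}$ as the hyperplane. As you correctly reconstruct, this hyperplane is nothing but your $H_{(z',1/\omega)}$, so the two proofs produce the same hyperplanes in the end; but the paper's detour through $\Psi_{z'}$ forces $r_s=1$ (so that $\Psi$ is defined), and then induction propagates the restriction $r_2=\cdots=r_s=1$. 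Your direct route removes this artificial hypothesis and shortens the argument to a few lines.
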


\begin{proof}Without loss of generality we may assume that $s\geq2$. Using induction on $k$ we prove that $\mathbb E_{E_k}\subset\mathbb C^{N_k}$ given by (\ref{eq:ek}) is linearly convex, $k=2,\dots,s$.

First we show that $\mathbb E_{E_2}$ is linearly convex. Take $x_0\notin\mathbb E_{E_2}$. We are looking for a complex hyperplane $H\subset\mathbb C^{N_2}$ such that $x_0\in H$ and $H\cap\mathbb E_{E_2}=\varnothing$. It follows from Theorem~\ref{thm:ret} that if we write $x=(x',x'')\in\mathbb E_{E_2}\subset\mathbb C^{r_1}\times\mathbb C^{N_2-r_1}$ then $x'\in\mathbb G_{r_1}$.

Assume first that $x_0'\notin\mathbb G_{r_1}$. Since $\mathbb G_{r_1}$ is linearly convex (cf.~\cite{npz2008}), there is a complex hyperplane $H'\subset\mathbb C^{r_1}$ such that $x_0'\in H'$ and $H'\cap\mathbb G_{r_1}=\varnothing$. Then $H:=H'\times\mathbb C^{N_2-r_1}$ is a complex hyperplane we are looking for (use Theorem~\ref{thm:ret}).

Now consider the case $x_0'\in\mathbb G_{r_1}$. It follows from Proposition~\ref{prop:genbha} that $\Psi_{z_1}(x_0',x'')$ is well defined for any $z_1\in\overline{\mathbb D}$ and $x''\in\mathbb C^{N_2-r_1}$. Moreover, applying Proposition~\ref{prop:genbha} we get that there is a $z_1\in\overline{\mathbb D}$ and $\omega\in\mathbb C\setminus\mathbb D$ such that $\Psi_{z_1}(x_0',x_0'')=\omega$ and $\Psi_{z_1}(x',x'')\neq\omega$ whenever $(x',x'')\in\mathbb E_{E_2}$. Thus
$$
H:=\{(x',x'')\in\mathbb C^{r_1}\times\mathbb C^{N_2-r_1}:\Psi_{z_1}(x',x'')=\omega\}
$$
is a complex hyperplane satisfying desired properties.

Now fix $2\leq k<s$ and assume that $\mathbb E_{E_k}$ is linearly convex. In order to show that $\mathbb E_{E_{k+1}}$ is linearly convex, we take $x_0\notin\mathbb E_{E_{k+1}}$ and look for a complex hyperplane $H\subset\mathbb C^{N_{k+1}}$ such that $x_0\in H$ and $H\cap\mathbb E_{E_{k+1}}=\varnothing$. Since $\mathbb E_{E_k}$ is assumed to be linearly convex we may proceed as in first inductive step replacing $\mathbb G_{r_1}$ with $\mathbb E_{E_k}$.

Pseudoconvexity of $\mathbb E_E$ is a consequence of Proposition~2.1.8 from \cite{aps2004}.
\end{proof}

For a given $m=(m_1,\dots,m_n)\in\mathbb N^n$ and an $m$-balanced domain $D\subset\mathbb C^n$ define its $m$-\emph{Minkowski functional}
$$
\mathfrak h_D(x):=\inf\{\lambda>0:m_{\lambda}.x\in D\},\quad x\in\mathbb C^n.
$$
This function has similar properties as the standard Minkowski functional for balanced domains. Some of them may be found in \cite{jp2013}. In particular,
\begin{itemize}
\item$\mathfrak h_D(m_{\lambda}.x)=|\lambda|\mathfrak h_D(x)$, $x\in\mathbb C^n$, $\lambda\in\mathbb C$,
\item$D=\{x\in\mathbb C^n:\mathfrak h_D(x)<1\}$,
\item if $D$ is bounded and $\partial D=\{x\in\mathbb C^n:\mathfrak h_D(x)=1\}$ then $\mathfrak h_D$ is continuous.
\end{itemize}

A bounded domain is called \emph{hyperconvex} if there exists a continuous negative plurisubharmonic exhaustion function. In particular, any hyperconvex domain is taut.

\begin{prop}\label{prop:hc}If $r_2=\dots=r_s=1$ then $\mathfrak h_{\mathbb E_E}$ is continuous. In particular, $\mathbb E_E$ is hyperconvex.
\end{prop}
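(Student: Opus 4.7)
The plan is first to verify the boundary characterization
\[
\partial\mathbb E_E=\{x\in\mathbb C^N:\mathfrak h_{\mathbb E_E}(x)=1\},
\]
from which continuity of $\mathfrak h_{\mathbb E_E}$ follows by the listed property of $m$-Minkowski functionals of bounded $m$-balanced domains, recalling that $\mathbb E_E$ is bounded and $|\alpha|$-balanced by Proposition~\ref{cor:balanced}, where $|\alpha|:=(|\alpha^1|,\dots,|\alpha^N|)$.

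The key input is the identity $R_{|\alpha|_\mu.x}(z)=R_x(\mu z)$ obtained in the proof of Proposition~\ref{cor:balanced}, which shows that for any $\mu>0$ one has $|\alpha|_\mu.x\in\mathbb E_E$ if and only if $R_x$ has no zero on the closed polydisc $(\mu\overline{\mathbb D})^s$. Combining this with the homogeneity $\mathfrak h_{\mathbb E_E}(|\alpha|_\mu.x)=\mu\mathfrak h_{\mathbb E_E}(x)$ and letting $\mu\nearrow 1$ converts the inequality $\mathfrak h_{\mathbb E_E}(x)\leq 1$ into the condition that $R_x$ be zero-free on $\mathbb D^s$. By Proposition~\ref{prop:clo}, the latter is precisely the condition $x\in\overline{\mathbb E}_E$; this is the only place where the hypothesis $r_2=\dots=r_s=1$ enters. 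Combined with the defining equality $\mathbb E_E=\{\mathfrak h_{\mathbb E_E}<1\}$ this yields the desired description of $\partial\mathbb E_E$, and hence continuity of $\mathfrak h_{\mathbb E_E}$.

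For hyperconvexity, I propose the exhaustion $u:=\log\mathfrak h_{\mathbb E_E}$. Continuity, negativity on $\mathbb E_E$ and $u(x)\to 0$ as $x\to\partial\mathbb E_E$ all follow from the first step. The nontrivial point is plurisubharmonicity of $u$, which I would reduce to the balanced case by pulling back along the proper finite holomorphic surjection
\[
\Phi\colon\mathbb C^N\longrightarrow\mathbb C^N,\qquad \Phi(y):=(y_1^{|\alpha^1|},\dots,y_N^{|\alpha^N|}).
\]
The preimage $\widetilde D:=\Phi^{-1}(\mathbb E_E)$ is bounded and balanced (directly from the $|\alpha|$-balanced structure of $\mathbb E_E$) and pseudoconvex by Proposition~\ref{prop:lc} together with holomorphicity of $\Phi$, hence by the classical statement for pseudoconvex balanced domains (cf.~\cite{jp2000}, Proposition~2.2.22) $\log h_{\widetilde D}$ is plurisubharmonic. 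A short scaling argument identifies $h_{\widetilde D}=\mathfrak h_{\mathbb E_E}\circ\Phi$, so $u\circ\Phi$ is plurisubharmonic.

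The main obstacle is then to descend plurisubharmonicity from $u\circ\Phi$ to $u$ across the ramification of $\Phi$. Since $\Phi$ is a local biholomorphism outside the proper analytic, in particular pluripolar, subset $\bigcup_{j:|\alpha^j|\geq 2}\{x_j=0\}$ of $\mathbb C^N$, the function $u$ is plurisubharmonic on the complement of this set; being continuous and hence locally bounded, it extends by the standard removable-singularity theorem for plurisubharmonic functions across pluripolar sets to a plurisubharmonic function on $\mathbb E_E$. This exhibits $u$ as a continuous negative plurisubharmonic exhaustion of $\mathbb E_E$, and the hyperconvexity follows.
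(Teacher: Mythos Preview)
Your argument for continuity is essentially the paper's own: both rest on the scaling identity $R_{|\alpha|_\lambda.x}(z)=R_x(\lambda z)$ together with Proposition~\ref{prop:clo} to identify $\overline{\mathbb E}_E$ with $\{x:R_x\ \text{zero-free on}\ \mathbb D^s\}$. The paper phrases it as a contradiction (assume $\mathfrak h_{\mathbb E_E}(x)>1$ for some $x\in\partial\mathbb E_E$), while you give the direct characterization $\{\mathfrak h_{\mathbb E_E}\le 1\}=\overline{\mathbb E}_E$; the content is identical.

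For hyperconvexity the two diverge in presentation. The paper does not argue plurisubharmonicity of $\log\mathfrak h_{\mathbb E_E}$ at all; it simply invokes Proposition~1 of \cite{n2006}, which handles the quasibalanced case in one stroke. Your route---pull back along $\Phi(y)=(y_j^{|\alpha^j|})$ to the balanced pseudoconvex preimage $\widetilde D$, use the classical balanced result to get $\log h_{\widetilde D}$ plurisubharmonic, identify $h_{\widetilde D}=\mathfrak h_{\mathbb E_E}\circ\Phi$, and then descend via the removable-singularity theorem across the ramification locus---is correct and is essentially how one proves the cited result of Nikolov in the first place. So the difference is purely expository: the paper outsources this step to the literature, whereas you reproduce the underlying mechanism. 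Your version is more self-contained; the paper's is shorter.
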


\begin{proof}To prove the continuity of $\mathfrak h_{\mathbb E_E}$ it suffices to show that
$$
\partial\mathbb E_E\subset\{x\in\mathbb C^N:\mathfrak h_{\mathbb E_E}(x)=1\}.
$$
Suppose there is $x\in\partial\mathbb E_E$ such that $\mathfrak h_{\mathbb E_E}(x)>1$. Take $0<\lambda<1$ such that $\mathfrak h_{\mathbb E_E}(|\alpha|_{\lambda}.x)=\lambda\mathfrak h_{\mathbb E_E}(x)>1$. In particular, $|\alpha|_{\lambda}.x\notin\mathbb E_E$. On the other hand, $x\in\overline{\mathbb E}_E$, i.e.~$R_x(z)\neq0$ for all $z\in\mathbb D^s$, whence
$$
R_{|\alpha|_{\lambda}.x}(z)=R_x(\lambda z)\neq0,\quad z\in\overline{\mathbb D}^s,
$$
i.e.~$|\alpha|_{\lambda}.x\in\mathbb E_E$---a contradiction.

To see $\mathbb E_E$ is hyperconvex, observe that $\log\mathfrak h_{\mathbb E_E}$ is continuous negative plurisubharmonic exhaustion function on $\mathbb E_E$ (cf.~\cite{n2006}, Proposition 1).
\end{proof}

Recall that a set $A\subset\mathbb C^n$ is \emph{polynomially convex} or a \emph{Runge domain} if for every compact $K\subset A$ the polynomial hull $\widehat K$ of $K$ is contained in $A$. Using the same argument as in the Proposition~\ref{prop:lc} we are able to show

\begin{prop}\label{prop:pc}If $r_2=\dots=r_s=1$ then $\mathbb E_E$ is polynomially convex.
\end{prop}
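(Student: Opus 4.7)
The plan is to mirror the inductive structure of Proposition \ref{prop:lc}, proving by induction on $k=2,\dots,s$ that $\mathbb E_{E_k}$ (with $E_k$ as in (\ref{eq:ek})) is polynomially convex; the assertion of Proposition \ref{prop:pc} is then the case $k=s$. At each step I fix a compact set $K\subset\mathbb E_{E_k}$ and a point $x_0\in\mathbb C^{N_k}\setminus\mathbb E_{E_k}$, write $x_0=(x_0',x_0'')$ in accordance with the decomposition of (\ref{eq:e'}), and seek a polynomial $p$ on $\mathbb C^{N_k}$ with $|p(x_0)|>\sup_K|p|$. As in Proposition \ref{prop:lc}, the argument splits according to whether $x_0'$ lies outside or inside the preceding domain ($\mathbb G_{r_1}$ in the base case, $\mathbb E_{E_{k-1}}$ in the inductive step).

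When $x_0'\notin\mathbb E_{E_{k-1}}$ (resp.\ $\notin\mathbb G_{r_1}$ for $k=2$), I proceed exactly as in the first case of the proof of Proposition \ref{prop:lc}: the projection $K'$ of $K$ is a compact subset of $\mathbb E_{E_{k-1}}$ by Theorem \ref{thm:ret}, and the inductive hypothesis (respectively the known polynomial convexity of $\mathbb G_{r_1}$) yields a polynomial $p'(x')$ separating $x_0'$ from $K'$. Lifting to $p(x',x''):=p'(x')$ separates $x_0$ from $K$.

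When $x_0'\in\mathbb E_{E_{k-1}}$, Proposition \ref{prop:genbha} provides $z_0\in\overline{\mathbb D}^{k-1}$ and $\omega_0\in\mathbb C\setminus\mathbb D$ with $\Psi_{z_0}(x_0)=\omega_0$, while compactness forces $M:=\sup_{x\in K}|\Psi_{z_0}(x)|<1$. In the proof of Proposition \ref{prop:lc} the separator was the zero set of the linear function $P_{x''}(z_0)-\omega_0 R'_{x'}(z_0)$; here I must instead turn $\Psi_{z_0}$ itself, which is rational in $x$, into a polynomial. The observation is that $1/R'_{x'}(z_0)$ is holomorphic on the polynomially convex (by induction) domain $\mathbb E_{E_{k-1}}$, so by the Oka--Weil theorem it is the uniform limit, on the compact set $K'\cup\{x_0'\}\subset\mathbb E_{E_{k-1}}$, of polynomials $p_n(x')$. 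The products $Q_n(x):=p_n(x')P_{x''}(z_0)$ are polynomials in $x$ (since $P_{x''}(z_0)$ depends polynomially on $x''$ only) and converge uniformly to $\Psi_{z_0}$ on $K\cup\{x_0\}$, so for $n$ sufficiently large $|Q_n(x_0)|>(1+|\omega_0|)/2>(1+M)/2>\sup_K|Q_n|$, giving the desired polynomial separation.

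The main obstacle is precisely the rational nature of $\Psi_{z_0}$: polynomial separation cannot be read off directly from Proposition \ref{prop:genbha}, and one must pass through a Runge/Oka--Weil approximation step. It is at exactly this point that the inductive hypothesis is indispensable: where the proof of Proposition \ref{prop:lc} used only linear convexity of $\mathbb E_{E_{k-1}}$ to produce a hyperplane, the present proof needs its polynomial convexity to produce a polynomial approximant of $1/R'_{x'}(z_0)$.
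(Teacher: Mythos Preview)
Your proof is correct, and it shares the paper's inductive scaffolding and two-case split, but the execution of the key step differs in a meaningful way.

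The paper first proves that the \emph{closures} $\overline{\mathbb E}_{E_k}$ are polynomially convex (using Proposition~\ref{prop:clo} rather than Proposition~\ref{prop:genbha} in Case~2), and only at the very end passes to the open domain via the scalings $\mathbb E_E^{(r)}:=\{|\alpha|_r.x:x\in\overline{\mathbb E}_E\}$. For the approximation of $\Psi_{z_0}$ by polynomials, the paper does \emph{not} appeal to Oka--Weil; instead it invokes a self-contained lemma on $m$-homogeneous Taylor expansions on quasibalanced domains, giving an explicit locally-normal polynomial series for $\Psi_{z_0}$. In particular, the paper's approximation step does not use the inductive hypothesis at all.

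Your route---working directly with the open domains and feeding the inductive polynomial convexity of $\mathbb E_{E_{k-1}}$ into Oka--Weil to approximate $1/R'_{x'}(z_0)$ on $\widehat{K'\cup\{x_0'\}}\subset\mathbb E_{E_{k-1}}$---is tighter: it avoids both the auxiliary series lemma and the final passage from $\overline{\mathbb E}_E$ to $\mathbb E_E$, and it makes the induction genuinely load-bearing in Case~2 (whereas in the paper the induction is used only in Case~1). The trade-off is that the paper's argument yields the slightly stronger byproduct that $\overline{\mathbb E}_E$ is polynomially convex, and its approximation is constructive rather than a black-box appeal to Oka--Weil. Note that your base case relies on the polynomial convexity of the open $\mathbb G_{r_1}$; the paper obtains this implicitly from the closed case plus scaling, so you should make sure to cite or derive it.
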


\begin{proof}Without loss of generality we may assume that $s\geq2$. First, using induction on $k$ as before, we prove that $\overline{\mathbb E}_{E_k}\subset\mathbb C^{N_k}$, $k=2,\dots,s$, where $E_k$ is given by (\ref{eq:ek}), are polynomially convex.

First we show that $\overline{\mathbb E}_{E_2}$ is polynomially convex. Take $x_0\notin\overline{\mathbb E}_{E_2}$. We are looking for a polynomial $f$ such that $|f|\leq1$ on $\overline{\mathbb E}_{E_2}$ and $|f(x_0)|>1$.

It follows from Theorem~\ref{thm:ret} that if we write $x=(x',x'')\in\overline{\mathbb E}_{E_2}\subset\mathbb C^{r_1}\times\mathbb C^{N_2-r_1}$ then $x'\in\overline{\mathbb G}_{r_1}$.

Assume first that $x_0'\notin\overline{\mathbb G}_{r_1}$. Since $\overline{\mathbb G}_{r_1}$ is polynomially convex (cf.~\cite{ay2004} for the proof that $\overline{\mathbb G}_2$ is polynomially convex; polynomial convexity of closure of the symmetrized polydisc may be proved in the same way), there is a polynomial $f'$ such that $|f'|\leq1$ on $\overline{\mathbb G}_{r_1}$ and $|f'(x_0')|>1$. Then $f(x',x''):=f'(x')$ is the polynomial we are looking for (use Theorem~\ref{thm:ret}).

Now consider the case $x_0'\in\overline{\mathbb G}_{r_1}$. It follows from Proposition~\ref{prop:clo} that there is a $z_1\in\mathbb D$ such that $|\Psi_{z_1}(x_0',x_0'')|>1$ and $|\Psi_{z_1}(x',x'')|\leq1$ on $\overline{\mathbb E}_{E_2}$. Thus it suffices to approximate the rational function $\Psi_{z_1}$ by polynomials. But it is an immediate consequence of the following

\begin{lem}Let $m=(m_1,\dots,m_n)\in\mathbb N^n$, let $D\subset\mathbb C^n$ be $m$-balanced domain, and let $f:\mathbb D\longrightarrow\mathbb C$ be holomorphic. Then
\begin{equation}\label{eq:1.8.1}
f(z)=\sum_{k=0}^{\infty}Q_k(z),\quad z\in D,
\end{equation}
where
$$
Q_k(z):=\sum_{\alpha\in\mathbb Z_+^n:\langle\alpha,m\rangle=k}\frac{1}{\alpha!}D^{\alpha}f(0)z^{\alpha},\quad z\in\mathbb C^n,
$$
(it is understood $Q_k=0$ if there is no $\alpha\in\mathbb Z_+^n$ with $\langle\alpha,m\rangle=k$); observe that $Q_k:\mathbb C^n\longrightarrow\mathbb C$ is an $m$-homogeneous polynomial of degree $k$, i.e.~$Q_k(m_{\lambda}.z)=\lambda^kQ_k(z)$, $z\in\mathbb C^n$, $\lambda\in\mathbb C$. Moreover, for any compact $K\subset D$ there exist $C>0$ and $\vartheta\in(0,1)$ such that
$$
\|Q_k\|_K\leq C\vartheta^k,\quad k\in\mathbb Z_+.
$$
In particular, the series converges locally normally in $D$.
\end{lem}

The above lemma is well known in the case of balanced domains (cf.~Proposition~1.8.4 in \cite{jp2008}). Since the quasibalanced case may be proved in the same way as the balanced case, we omit here its proof.

Now fix $2\leq k<s$ and assume that $\overline{\mathbb E}_{E_k}$ is polynomially convex. In order to show that $\overline{\mathbb E}_{E_{k+1}}$ is polynomially convex, we take $x_0\notin\overline{\mathbb E}_{E_{k+1}}$ and look for a polynomial $f$ such that $|f|\leq1$ on $\overline{\mathbb E}_{E_{k+1}}$ and $|f(x_0)|>1$. Since $\overline{\mathbb E}_{E_k}$ is assumed to be linearly convex we may proceed as in first inductive step replacing $\overline{\mathbb G}_{r_1}$ with $\overline{\mathbb E}_{E_k}$.

Define
$$
\mathbb E_{E}^{(r)}:=\{|\alpha|_r.x:x\in\overline{\mathbb E}_E\},\quad r\in(0,1).
$$
Observe that $\mathbb E_E^{(r)}$ is polynomially convex and $$
\bigcup_{r\in(0,1)}\mathbb E_{E}^{(r)}=\mathbb E_E.
$$
Now consider any compact $K\subset\mathbb E_E$. Then, for $r$ sufficiently close to 1, $K\subset\mathbb E_{E}^{(r)}$. Since $\mathbb E_E^{(r)}$ is polynomially convex, we have $\widehat K\subset\mathbb E_{E}^{(r)}\subset\mathbb E_E$, i.e.~$\mathbb E_E$ is polynomially convex.
\end{proof}

\section{The $\mu_{1,n}$-quotients}\label{sect:mq}

In 2014 G.~Bharali introduced the following domain
\begin{equation}\label{eq:muquot}
\mathbb E_n:=\left\{(x,y)\in\mathbb C^{n-1}\times\mathbb C^n:\forall_{z,w\in\overline{\mathbb D}}\ Q_x(z)-wP_y(z)\neq0\right\},\quad n\geq2,
\end{equation}
where
$$
P_y(z):=\sum_{j=1}^n(-1)^{j-1}y_jz^{j-1},\quad Q_x(z):=1+\sum_{j=1}^{n-1}(-1)^jx_jz^j.
$$
$\mathbb E_n$ is called \emph{$\mu_{1,n}$-quotient}.

It is a natural generalization of the tetrablock, since $\mathbb E_2=\mathbb E$. On the other hand, $\mu_{1,n}$-quotient is a particular case of the generalized tetrablock. Indeed, $\mathbb E_n=\mathbb E_E$ for $E=\{\diag[z_1\mathbb I_{n-1},z_2]\in\mathbb C^{n\times n}:z_1,z_2\in\mathbb C\}$.

Below we collect the geometric properties of $\mu_{1,n}$-quotients $\mathbb E_n$, $n\geq2$, which are immediate consequence of the results from the previous section.

\begin{thm}\label{thm:ee2en}
\begin{enumerate}
\item[(a)]$\mathbb E_n$ is bounded $(1,2,\dots,n-1,k+1,k+2,\dots,k+n)$-balanced domain, $k\geq0$, but not circled.
\item[(b)]$\mathbb E_n$ is a Hartogs domain in $\mathbb C^{2n-1}$ over $\mathbb G_{n-1}$ with $n$-dimensional balanced fibers.
\item[(c)]$\mathbb E_n$ cannot be exhausted by domains biholomorphic to convex ones.
\item[(d)]Let $n\geq4$. Then $l_{\mathbb E_n}$ is not a distance. In particular, $c_{\mathbb E_n}\not\equiv l_{\mathbb E_n}$.
\item[(e)]$c_{\mathbb E_4}(0,\cdot)\not\equiv k_{\mathbb E_4}(0,\cdot)$. In particular, $c_{\mathbb E_4}(0,\cdot)\not\equiv l_{\mathbb E_4}(0,\cdot)$.
\item[(f)]If $n\geq4$ then $\mathbb E_n$ is neither $\mathbb C$-convex nor starlike about the origin.
\item[(g)]$\mathbb E_n$ is linearly convex and hyperconvex.
\item[(h)]$\mathbb E_n$ is polynomially convex.
\item[(i)]$\mathbb E_3$ is not an analytic retract of the open unit ball of a $\mathcal J^*$-algebra of finite rank.
\end{enumerate}
\end{thm}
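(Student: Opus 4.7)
My plan is to derive the theorem as a systematic application of the general results proved for $\mathbb{E}_E$ to the specific case
\[
E=\{\diag[z_1\mathbb I_{n-1},z_2]:z_1,z_2\in\mathbb C\}, \qquad s=2,\ r_1=n-1,\ r_2=1,\ N=2n-1.
\]
Before invoking the previous results I first need to compute the ordered sequence $\alpha^1<\dots<\alpha^{2n-1}$ inside $A(n-1,1)$. Since the order in (\ref{eq:order}) compares the last differing coordinate, the ordering is $(1,0),(2,0),\dots,(n-1,0),(0,1),(1,1),\dots,(n-1,1)$, giving
\[
(|\alpha^1|,\dots,|\alpha^{2n-1}|)=(1,2,\dots,n-1,\,1,2,\dots,n).
\]

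For part (b) I apply Theorem~\ref{thm:ret} with $E':=\{z\mathbb I_{n-1}:z\in\mathbb C\}$, i.e.\ $s'=1$. Then $\mathbb{E}_{E'}=\mathbb G_{n-1}$, $N'=n-1$, $N''=n$, and $M=\prod_{j=s'+1}^{s}(r_j+1)-1=1$. The fiber balancing vector reduces to $(\,|\alpha^n|,\dots,|\alpha^n|\,)=(1,\dots,1)\in\mathbb Z^n$, so the fibers are balanced in the usual sense, confirming (b). Part (c) is then Theorem~\ref{thm:e1}; part (d) follows from Proposition~\ref{prop:ecl}, since $r_1=n-1\geq 3$ when $n\geq 4$; (e) is Proposition~\ref{prop:eck} applied with $r_1=3$; (f) is Proposition~\ref{prop:ncc}; (g) combines Proposition~\ref{prop:lc} with Proposition~\ref{prop:hc} (both using $r_2=1$); (h) is Proposition~\ref{prop:pc}; and (i) is Proposition~\ref{prop:ar} with $r_1=2$ for $n=3$.

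The only part requiring real verification is the $k$-parameter family in~(a). The case $k=0$ is immediate from Proposition~\ref{cor:balanced} together with the computation of $(|\alpha^j|)_{j}$ above. For $k\geq 1$, I would verify the claim directly: given $x=(x',x'')\in\mathbb E_n$ and $\lambda\in\overline{\mathbb D}$, I need $R_{\tilde x}(z_1,z_2)\neq 0$ for all $(z_1,z_2)\in\overline{\mathbb D}^2$, where $\tilde x$ is the point with components $(\lambda x_1',\dots,\lambda^{n-1}x_{n-1}',\lambda^{k+1}x_1'',\dots,\lambda^{k+n}x_n'')$. Using the splitting (\ref{eq:Rsplit}) and a direct substitution one obtains $R_{\tilde x}(z_1,z_2)=R_x(\lambda z_1,\lambda^{k+1}z_2)$; since $(\lambda z_1,\lambda^{k+1}z_2)\in\overline{\mathbb D}^2$, this is nonzero because $x\in\mathbb E_n$. (Equivalently, combine the $k=0$ balancedness with the balanced fibers of part~(b), applying first $x''\mapsto\lambda^k x''$ in the fiber and then the $(1,\dots,n-1,1,\dots,n)$-action.) The fact that $\mathbb E_n$ is not circled follows from Corollary~\ref{cor:circ}.

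There is essentially no serious obstacle: the theorem is a collation of consequences of the retraction framework. The only point that is not a one-line invocation is the $k\geq 1$ refinement in (a), and even there the calculation is a two-line substitution in the defining polynomial $R_x$.
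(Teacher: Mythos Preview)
Your proposal is correct and matches the paper's own approach exactly: the paper does not give a separate proof of this theorem but simply states that all items are ``immediate consequence of the results from the previous section,'' and your item-by-item attribution (Proposition~\ref{cor:balanced} and Corollary~\ref{cor:circ} for (a), Theorem~\ref{thm:ret} for (b), Theorem~\ref{thm:e1} for (c), Propositions~\ref{prop:ecl}, \ref{prop:eck}, \ref{prop:ncc}, \ref{prop:lc}, \ref{prop:hc}, \ref{prop:pc}, \ref{prop:ar} for (d)--(i)) is precisely the intended derivation. The one place where you add genuine content is the $k\ge 1$ refinement in (a), which the paper leaves implicit; your direct substitution $R_{\tilde x}(z_1,z_2)=R_x(\lambda z_1,\lambda^{k+1}z_2)$ (or equivalently the composition of the fiber action with the $k=0$ action) is the right way to fill this in.
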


\begin{rem}In view of the above results, among the domains $\mathbb E_n$, $n\geq2$, the only interesting examples---from the point of view of the Lempert theorem---are $\mathbb E_2$ and, possibly, $\mathbb E_3$. Recall that $c_{\mathbb E_2}=l_{\mathbb E_2}$ (cf.~\cite{ekz2013}) and $\mathbb E_2$ is $\mathbb C$-convex (cf.~\cite{z2013}). It is an open question whether $\mathbb E_3$ has also these properties.
\end{rem}

\section{The pentablock}\label{sect:p}

Recently J.~Agler, Z.~A.~Lykova, and N.~J.~Young introduced a new domain related do $\mu$-synthesis, called pentablock. Recall that the \emph{pentablock} may be defined as follows (cf.~\cite{aly2014})
$$
\mathcal P:=\left\{(a,s,p)\in\mathbb C^3:(s,p)\in\mathbb G_2:|a|<\left|1-\frac{\frac12s\overline{\beta}}{1+\sqrt{1-|\beta|^2}}\right|\right\},
$$
where
$$
\beta=\beta(s,p):=\frac{s-\overline{s}p}{1-|p|^2}.
$$

Note that $\mathcal P$ is a Hartogs domain over $\mathbb G_2$ with balanced fibers. Moreover, $\mathcal P$ is bounded, nonconvex, $(k,1,2)$-balanced, $k\geq0$, starlike about the origin and polynomially convex (cf.~\cite{aly2014}). Recently \L.~Kosi\'nski in \cite{k2014} showed that $\mathcal P$ is linearly convex. In particular, $\mathcal P$ is pseudoconvex.

Proposition~\ref{prop:x} implies immediately

\begin{thm}\label{thm:3}$\mathcal P$ cannot be exhausted by domains biholomorphic to convex ones.
\end{thm}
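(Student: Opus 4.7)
The plan is to reduce the statement to a direct application of Proposition~\ref{prop:x} with $D=\mathcal P$ and $G=\mathbb G_2$. Three ingredients are needed: $\mathbb G_2$ must be an analytic retract of $\mathcal P$, $\mathbb G_2$ must be taut, and $\mathcal S(\mathbb G_2)$ must be empty. Given the machinery already set up in the paper and cited in the paragraph preceding the theorem, all three are either immediate or already referenced.

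First, I would exhibit the retraction explicitly using the Hartogs structure: since $\mathcal P$ is a Hartogs domain over $\mathbb G_2$ with balanced fibers (as recorded just before the theorem), the point $a=0$ lies in every fiber, and so the maps
$$
\mathbb G_2\ni (s,p)\overset{\theta}\longmapsto (0,s,p)\in\mathcal P,\qquad \mathcal P\ni (a,s,p)\overset{\iota}\longmapsto (s,p)\in\mathbb G_2
$$
are well-defined, holomorphic, and satisfy $\iota\circ\theta=\id_{\mathbb G_2}$. This exhibits $\mathbb G_2$ as an analytic retract of $\mathcal P$, a fact the author has already alluded to in the discussion before Proposition~\ref{prop:ar}.

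Next, $\mathbb G_2$ is taut because it is a bounded hyperconvex domain (cf.~\cite{ay2004}), and $\mathcal S(\mathbb G_2)=\varnothing$ by Edigarian's result (\cite{e2013}, Corollary~3) — precisely the ingredient used in the proof of Theorem~\ref{thm:e1} when ruling out the convex exhaustibility of the generalized tetrablock in the case $\max\{r_1,\dots,r_s\}\le2$. Having verified the three hypotheses, Proposition~\ref{prop:x} yields that $\mathcal P$ cannot be exhausted by domains biholomorphic to convex ones.

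There is essentially no obstacle: the only content to check is the straightforward verification that $\theta$ takes values in $\mathcal P$, which follows because the fiber of $\mathcal P$ over any $(s,p)\in\mathbb G_2$ is a balanced (hence $0$-containing) subset of $\mathbb C$. Everything else is cited from prior literature or from earlier parts of this paper.
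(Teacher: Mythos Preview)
Your proposal is correct and follows essentially the same approach as the paper: the paper also invokes Proposition~\ref{prop:x} with the identical retraction maps $\theta:(s,p)\mapsto(0,s,p)$ and $\iota:(a,s,p)\mapsto(s,p)$, together with the fact that $\mathcal S(\mathbb G_2)=\varnothing$ from \cite{e2013}. The only difference is that you make the tautness of $\mathbb G_2$ explicit, whereas the paper leaves it implicit in the application of Proposition~\ref{prop:x}.
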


\begin{proof}Indeed, since $\mathcal P$ is a Hartogs domain over $\mathbb G_2$, it suffices to take
\begin{equation}\label{eq:penta}
\mathbb G_2\ni(s,p)\overset{\theta}\longmapsto(0,s,p)\in\mathcal P,\quad \mathcal P\ni(a,s,p)\overset{\iota}\longmapsto(s,p)\in\mathbb G_2,
\end{equation}
and observe that $\mathcal S(\mathbb G_2)=\varnothing$ (cf.~\cite{e2013}, Corollary 3).
\end{proof}

Moreover, we have the following simple

\begin{prop}\label{prop:ph}Let $m=(k,1,2)$, $k\geq1$. Then the $m$-Minkowski functional $\mathfrak h_{\mathcal P}$ is continuous. In particular, $\mathcal P$ is hyperconvex.
\end{prop}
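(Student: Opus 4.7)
The plan is to adapt the scheme of the proof of Proposition~\ref{prop:hc}: it suffices to verify $\partial\mathcal P\subset\{x\in\mathbb C^3:\mathfrak h_{\mathcal P}(x)=1\}$, which together with $\mathcal P=\{\mathfrak h_{\mathcal P}<1\}$ will give continuity of $\mathfrak h_{\mathcal P}$. Suppose for a contradiction that $x_0=(a_0,s_0,p_0)\in\partial\mathcal P$ satisfies $\mathfrak h_{\mathcal P}(x_0)>1$, and pick $\lambda\in(0,1)$ with $\lambda\mathfrak h_{\mathcal P}(x_0)>1$, so that $m_{\lambda}.x_0=(\lambda^k a_0,\lambda s_0,\lambda^2 p_0)\notin\mathcal P$. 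The goal is to reach a contradiction by producing $m_{\lambda}.x_0\in\mathcal P$.

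The argument will rest on three structural inputs. First, $\mathcal P$ is a Hartogs domain over $\mathbb G_2$ with circled fibers, of the explicit form $|a|<r(s,p)$ with
$$
r(s,p):=\left|1-\frac{\frac12 s\,\overline{\beta(s,p)}}{1+\sqrt{1-|\beta(s,p)|^2}}\right|,\qquad (s,p)\in\mathbb G_2;
$$
a direct inspection, using the bounds $|p|<1$ and $|\beta(s,p)|<1$ on $\mathbb G_2$ (cf.~\cite{aly2014}), shows that $r$ is continuous on $\mathbb G_2$. Second, $\mathbb G_2$ is hyperconvex, hence $(s_0,p_0)\in\overline{\mathbb G}_2$ together with $\lambda\in(0,1)$ force $(\lambda s_0,\lambda^2 p_0)\in\mathbb G_2$. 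Third, $\mathcal P$ is $(0,1,2)$-balanced (the $k=0$ case recalled at the beginning of the section), which, translated through the Hartogs description, is precisely the radius monotonicity $r(s,p)\leq r(\lambda s,\lambda^2 p)$ for $(s,p)\in\mathbb G_2$ and $|\lambda|\leq 1$.

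Armed with these, pick a sequence $(a_n,s_n,p_n)\in\mathcal P$ converging to $(a_0,s_0,p_0)$; then $|a_n|<r(s_n,p_n)\leq r(\lambda s_n,\lambda^2 p_n)$, and since $(\lambda s_n,\lambda^2 p_n)\to(\lambda s_0,\lambda^2 p_0)\in\mathbb G_2$, continuity of $r$ at this interior limit point yields $|a_0|\leq r(\lambda s_0,\lambda^2 p_0)$. The hypothesis $k\geq 1$, together with $\lambda<1$, promotes this to the strict inequality $|\lambda^k a_0|=\lambda^k|a_0|<r(\lambda s_0,\lambda^2 p_0)$, whence $m_{\lambda}.x_0\in\mathcal P$, the desired contradiction. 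Hyperconvexity then follows exactly as at the end of the proof of Proposition~\ref{prop:hc}: $\log\mathfrak h_{\mathcal P}$ is a continuous, negative, plurisubharmonic exhaustion of $\mathcal P$, the plurisubharmonicity coming from the pseudoconvexity of $\mathcal P$ recalled just above the statement. The main technical point is the continuity of the fiber radius $r$ on $\mathbb G_2$; the hypothesis $k\geq 1$ is used just once but decisively, namely to upgrade a weak inequality to a strict one, which is precisely why the case $k=0$ must be excluded from the statement.
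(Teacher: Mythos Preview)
Your proof is correct and follows essentially the same route as the paper's: reduce continuity to the inclusion $\partial\mathcal P\subset\{\mathfrak h_{\mathcal P}=1\}$, assume a boundary point has functional value $>1$, and derive the contradiction $m_\lambda.x_0\in\mathcal P$ by combining the $(0,1,2)$-balanced property (to control the fiber radius after scaling) with the strict factor $\lambda^k<1$ coming from $k\geq1$. The only cosmetic difference is that the paper invokes the closure description of $\overline{\mathcal P}$ from \cite{aly2014}, Theorem~5.3, to obtain $|a_0|\leq r(\lambda s_0,\lambda^2 p_0)$ directly, whereas you recover the same inequality by a sequence argument plus continuity of $r$ on $\mathbb G_2$; both are equally short, and your version has the mild advantage of not needing the external closure characterization. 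One tiny remark (which applies equally to the paper's argument): the step ``$\lambda^k|a_0|<|a_0|$'' tacitly assumes $a_0\neq0$; when $a_0=0$ the conclusion $m_\lambda.x_0\in\mathcal P$ still holds simply because $r>0$ on $\mathbb G_2$.
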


\begin{proof}To prove the continuity of $\mathfrak h_{\mathcal P}$ it suffices to show that
$$
\partial\mathcal P\subset\{z\in\mathbb C^3:\mathfrak h_{\mathcal P}(z)=1\}.
$$
Suppose $\mathfrak h_{\mathcal P}(z)>1$ for some $z\in\partial\mathcal P$ and take $0<r<1$ with $\mathfrak h_{\mathcal P}(m_r.z)=r\mathfrak h_{\mathcal P}(z)>1$. In particular, $m_r.z\notin\mathcal P$. On the other hand, $z=(a,s,p)\in\overline{\mathcal P}$, i.e. $(s,p)\in\overline{\mathbb G}_2$ and
$$
|a|\leq\left|1-\frac{\frac12s\overline{\beta}}{1+\sqrt{1-|\beta|^2}}\right|
$$
(cf.~\cite{aly2014}, Theorem 5.3). Write $m_r.z=(r^ka,ra,r^2p)$. Then $(rs,r^2p)\in\mathbb G_2$ and
$$
|r^ka|<|a|\leq\left|1-\frac{\frac12rs\overline{\beta(rs,r^2p)}}{1+\sqrt{1-|\beta(rs,r^2p)|^2}}\right|
$$
(last inequality follows from the fact that $\mathcal P$ is $(0,1,2)$-balanced, i.e.~$(a,rs,r^2p)\in\overline{\mathcal P}$), i.e.~$m_r.z\in\mathcal P$---a contradiction.

To see $\mathcal P$ is hyperconvex, observe that $\log\mathfrak h_{\mathcal P}$ is continuous negative plurisubharmonic exhaustion function on $\mathcal P$ (cf.~\cite{n2006}, Proposition 1).
\end{proof}

\begin{rem}We end this section with some natural questions. Do the Carath\'eodory distance and the Lempert function coincide on the pentablock? Is pentablock a $\mathbb C$-convex domain? Can $\mathcal P$ be exhausted by strongly linearly convex domains?
\end{rem}

\textbf{Acknowledgements.} The author is greatly indebted to \L.~Kosi\'nski for many stimulating conversations.

\end{document}